
\documentclass[12pt]{article}
\usepackage{mathpazo}
\usepackage{latexsym}
\usepackage{a4wide}
\usepackage{mathrsfs}
\usepackage{amsmath}
\usepackage{amssymb}
\usepackage{comment}

\newtheorem{thm}{Theorem}[section]

\newtheorem{cor}[thm]{Corollary}

\newtheorem{exl}[thm]{Example}

\newtheorem{rem}[thm]{Remark}
\newtheorem{rems}[thm]{Remark}
\newtheorem{lemma}[thm]{Lemma}
\newtheorem{prop}[thm]{Proposition}

\newcommand{\N}{\mathbb{N}}
\newcommand{\R}{\mathbb{R}}
\newcommand{\iii}{\mathtt{i}}
\newcommand{\jjj}{\mathtt{j}}

\numberwithin{equation}{section}
\newenvironment{proof}{\vspace{-5pt}\noindent\textbf{Proof:}}{$\Box$\\}

\begin{document}
\title{The doubling metric and doubling measures\thanks{We would like to thank Natalia Chernova, Marcus Pivato, and John Levy for helpful discussions. VS has been partly supported by the Academy of Finland via the Centre of Excellence in Analysis and Dynamics Research. We are particularly indebted to an anonymous reviewer for his exceptionally detailed and helpful report.}}

\author{J\'{a}nos Flesch\footnote{Department of Quantitative Economics, Maastricht University, P.O.Box 616, 6200 MD, The Netherlands. E-mail: \texttt{j.flesch@maastrichtuniversity.nl}.} \and Arkadi Predtetchinski\footnote{Department of Economics, Maastricht University, P.O.Box 616, 6200 MD, The Netherlands. E-mail: \texttt{a.predtetchinski@maastrichtuniversity.nl.}} \and Ville Suomala\footnote{Department of Mathematical sciences, University of Oulu, P.O Box 3000, FI--90014, Finland. E-mail: \texttt{ville.suomala@oulu.fi}}}
\maketitle

\begin{abstract}
\noindent
We introduce the so-called doubling metric on the collection of non-empty bounded open subsets of a metric space. Given an open subset $U$ of a metric space $X$, the predecessor $U_{*}$ of $U$ is defined by doubling the radii of all open balls contained inside $U$, and taking their union. The predecessor of $U$ is an open set containing $U$. The directed doubling distance between $U$ and another subset $V$ is the number of times that the predecessor operation needs to be applied to $U$ to obtain a set that contains $V$. Finally, the doubling distance between open sets $U$ and $V$ is the maximum of the directed distance between $U$ and $V$ and the directed distance between $V$ and $U$.
\end{abstract}

\noindent\textbf{Keywords:} Metric; doubling measure; quasisymmetric map.\medskip\\
\noindent\textbf{Mathematics Subject Classification (2010):} Primary 54E35; Secondary 28A12, 51F99.

\section{Introduction}
There are many ways to define a distance between two subsets $U$ and $V$ of a metric space $(X,\delta)$. For instance, one may consider the minimal distance between pairs of points $(x,y)\in U\times V$, that is $\inf_{x\in U\,,y\in V}\delta(x,y)$. Obviously, this distance is not usually a metric on any reasonable family of subsets of $X$ since it equals zero whenever the closures of $U$ and $V$ have points in common. Thus, if we want to define a distance between subsets of $X$ in such a way that it satisfies the axioms of a metric, we need
to consider more subtle definitions. In this respect, the Hausdorff distance,
\[d_H(U,V)=\max\{\sup_{x\in U} \inf_{y\in V}\delta(x,y),\sup_{y\in V} \inf_{x\in U}\delta(x,y)\}\,,\]
is one of the most classical and most used concepts. As is well-known, it defines a metric on the collection of non-empty compact subsets of $X$. Moreover, the space of all non-empty compact subsets of $X$ with the metric $d_H$ inherits many topological properties (such as compactness or completness) from the space $X$. The applications of the Hausdorff distance and its many variants are numerous and far-reaching ranging from topology and geometry to computer vision (see e.g. \cite{Arutyunovetal2016,
	Munkres2000,
	Berindeetal2013}).

The Hausdorff distance is very robust with respect to discretizations. If $X$ is separable, then for any subset $S\subseteq X$ and any $\epsilon$, there is a countable set $S_\epsilon\subseteq S$ that is $\epsilon$-close to $S$ in the Hausdorff distance. Passing to such a discretization can be a huge advantage, but, from the point of view of analysis, this causes limitations for the usage of the Hausdorff distance.
For instance, if the space $X$ is endowed with a measure $\mu$, there is in general no way to relate the measures of $U$ and $V$ based on the Hausdorff distance of $U$ and $V$ alone, no matter how regular the measure $\mu$ is.

In the present paper, we introduce the so-called doubling metric on the collection of non-empty bounded open subsets of a metric space. The definition is based on an intuitive idea of a predecessor of a set. Given an open subset $U$ of a metric space $X$, the predecessor $U_{*}$ of $U$ is obtained by doubling the radii of all open balls contained inside $U$, and taking their union. The predecessor of $U$ is an open set and $U\subseteq U_*$. The directed doubling distance from $U$ to another set $V$ is the number of times that the predecessor operation has to be applied to $U$ to obtain a set that contains $V$. Finally, the (symmetric) doubling distance between $U$ and $V$ is the maximum of the directed distance from $U$ to $V$ and the directed distance from $V$ to $U$. The directed and symmetric distances satisfy the triangle inequality on all open subsets of $X$, but need not be finite. Restricting the doubling distance to non-empty bounded open sets, we obtain a genuine metric.

The doubling distance $d$ has a number of important features in terms of the fine structure of the sets $U$ and $V$. Firstly, it is invariant under similarity transformations (see Lemma \ref{thm.union}). Secondly, for any doubling measure, the measures of two open sets $U,V\subseteq X$ that are close in the doubling distance are comparable. More precisely, it holds that
\begin{equation}\label{eq:mled}
\mu(U)\ge C^{-3 d(U,V)}\mu(V)\,,
\end{equation}
whenever $\mu$ is a $C$-doubling measure (Theorem \ref{7Rcor}). Recall that a measure $\mu$ on $X$ is $C$-doubling, if
\begin{equation}\label{eqn.doubling}
0<\mu(B(x,2r)) \leq C \mu(B(x,r)) < \infty.
\end{equation}
for all $x\in X$ and $r>0$. Here $B(x,r)$ denotes the closed ball
$B(x,r)=\{y\in X\,:\,\delta(y,x)\le r\}$. Doubling measures play an essential role in modern analysis and the existence  of doubling measures supported by $X$ is an important regularity feature of the space $X$. It is well known (see \cite{LuukkainenSaksman1998}) that metric spaces that are complete and geometrically doubling support some doubling measures. Without the completeness assumption, the existence of doubling measures is a subtle question, see \cite{KaufmanWu1995, Saksmann1999, CsornyeiSuomala2012}. Recall that a metric space $X$ is geometrically doubling, if there is an $N\in\N$ such that each ball $B\subseteq X$ may be covered by $N$ balls with half the radius of $B$.

Our main result (Theorem \ref{thm.simple}) states that the implication in \eqref{eq:mled} can be reversed in the case of the so-called simple open sets: a simple open set is a finite union of open balls. More precisely, the result claims the following: consider simple open sets $U$ and $V$ in a metric space $X$, and suppose that $X$ carries some doubling measures. If there is a constant $K<\infty$ such that
\[C^{-K} \mu(U)\le \mu(V)\le C^K\mu(U)\,,\]
for all $C<\infty$ and all $C$-doubling measures $\mu$ on $X$, then $d(U,V) \le 24 K+8$.

The paper is organised as follows: In Section \ref{sec:dmetric}, we introduce the doubling metric along with its basic properties. We also present an equivalent game theoretic definition, which could be of independent interest. In spaces that carry doubling measures, we define a measure variant $m$ of the doubling distance and prove the basic estimate \eqref{eq:mled} by comparing these two distances. Section \ref{sec:main} is devoted to the main result providing a sufficient condition for the comparability of the doubling distance $d$ and its measure variant $m$. In Section \ref{sec:maps}, we study continuous functions between metric spaces $X$ and $Y$ that distort the distances $d$ and/or $m$ in a Lipschitz manner. Such functions relate naturally to maps that preserve doubling measures, quantitatively, and provide a connection to quasisymmetric maps. Finally, in Section \ref{sec:remarks}, we discuss an open problem and a definition of porosity based on the doubling metric.

\section{The doubling  metric}\label{sec:dmetric}
\subsection{Main definitions}
Consider a metric space $(X, \delta)$. By a measure on $X$ we always mean a non-negative countably additive set function on the sigma-algebra of Borel subsets of $X$. Let $1\le C<\infty$. A measure $\mu$ on $X$ is said to be \emph{$C$-doubling} if it satisfies \eqref{eqn.doubling}. Equivalently, $\mu$ is $C$-doubling if for each $x \in X$ and $r > 0$ it holds that
\begin{equation}\label{eqn.doubling2}
\mu(O(x,2r)) \leq C\mu(O(x,r)) < \infty\,,
\end{equation}
where $O(x,r)=\{y\in X\,:\,\delta(y,x)<r\}$ denotes the open ball of radius $r$ centered at $x$. (The two conditions are equivalent because an open ball is a limit of an increasing sequence of closed balls, and a closed ball is a limit of a decreasing sequence of closed balls.) We call $\mu$ a \emph{doubling measure}, if it is $C$-doubling for some $C<\infty$. We let $\mathcal{D}(X)$ (resp. $\mathcal{D}_{C}(X)$) be the collection of all doubling (resp. $C$-doubling) measures on $X$.

Throughout the paper, $\mathbb{N} = \{0,1,2,\dots\}$.

We proceed with the definition of the \emph{predecessor operation}, and the \emph{directed distance}. For an open subset $U$ of $X$ define the \textit{predecessor} of $U$ to be the set
\[U_{*} = \bigcup\left\{O(x,2r):x \in X\text{ and }r > 0\text{ such that }O(x,r) \subseteq U\right\}.\]

Consider also a set $V \subseteq X$. Let $U_{*}^{0} = U$ and for each $n \in \mathbb{N}$ let $U_{*}^{n + 1} = (U_{*}^{n})_{*}$. Let $d_{\rightarrow}(U,V)$ denote the smallest number $n$ such that $V \subseteq U_{*}^{n}$. If no such number exists, we set $d_{\rightarrow}(U,V) = +\infty$. The number $d_{\rightarrow}(U,V)$ is called the \textit{directed (doubling) distance} between $U$ and $V$. We state the following obvious properties of $d_{\rightarrow}$ for future reference.

\begin{lemma}\label{thm.union}
\begin{enumerate}
\renewcommand{\labelenumi}{(\arabic{enumi})}
\renewcommand{\theenumi}{\arabic{enumi}}	
\item\label{incl} Let $U$ be an open subset of $X$. Then $U_{*}$ is an open set and $U \subseteq U_{*}$.
\item\label{u1} Let $U$ and $W$ be open subsets $X$. If $U \subseteq W$, then $U_{*} \subseteq W_{*}$.
\item\label{u2}  Let $\{U_{i}: i\in I\}$ be a collection of open subsets of $X$ and $\{V_{i}: i\in I\}$ a collection of arbitrary subsets of $X$. Then $d_{\rightarrow}(\bigcup_{i \in I} U_{i}, \bigcup_{i \in I}V_{i}) \leq \sup_{i \in I} d_{\rightarrow}(U_{i},V_{i})$.
\item\label{finite} If $U$ is a non-empty open subset of $X$ and if $V$ is a bounded subset of $X$, then $d_{\rightarrow}(U,V)$ is finite.
\item\label{u3} The directed doubling distance satisfies the triangle inequality: if $U$, $W$, and $V$ are open subsets of $X$, then $d_{\rightarrow}(U,W) \leq d_{\rightarrow}(U,V) + d_{\rightarrow}(V,W)$.
\item\label{u4} The directed doubling distance is invariant under similarity transformations. More generally, if $f\colon X\to Y$ is a bijection\footnote{Note that we use the symbols $\delta$, $d_\rightarrow$ to denote the metric/directed doubling distance on both $X$ and $Y$.} such that
\[K_1\le \frac{\delta(f(x),f(y))}{\delta(x,y)}\le K_2\]
for all $x,y\in X$, then
\[\frac{1}{K}\le \frac{d_\rightarrow(f(U),f(V))}{d_\rightarrow(U,V)}\le K\,,\]
for all open subsets $U$ and $V$ of $X$, where $K$ is the smallest natural number with $K \geq 1+ \log_2\frac{K_2}{K_1}$.
\end{enumerate}
\end{lemma}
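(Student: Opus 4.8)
The plan is to prove the six statements of Lemma \ref{thm.union} in the order listed, since each is either immediate from the definition or builds on the preceding ones.

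For part (\ref{incl}), $U_*$ is a union of open balls, hence open; and $U \subseteq U_*$ because for every $x \in U$ there is $r > 0$ with $O(x,r) \subseteq U$, and then $x \in O(x,2r) \subseteq U_*$. For part (\ref{u1}), if $U \subseteq W$ then every ball $O(x,r) \subseteq U$ also satisfies $O(x,r) \subseteq W$, so the union defining $U_*$ is taken over a subcollection of the balls defining $W_*$; hence $U_* \subseteq W_*$. An easy induction then gives $U_*^n \subseteq W_*^n$ for all $n$, which I will record since it is used below. For part (\ref{u2}), set $n = \sup_i d_\rightarrow(U_i,V_i)$ (the claim is vacuous if this is $+\infty$). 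For each $i$, $V_i \subseteq (U_i)_*^n$ by definition of the directed distance together with the monotonicity $(U_i)_*^{k} \subseteq (U_i)_*^{n}$ for $k \le n$ coming from part (\ref{incl}). Also $(U_i)_* \subseteq (\bigcup_j U_j)_*$ by part (\ref{u1}), so by induction $(U_i)_*^n \subseteq (\bigcup_j U_j)_*^n$. Therefore $\bigcup_i V_i \subseteq \bigcup_i (U_i)_*^n \subseteq (\bigcup_j U_j)_*^n$, giving $d_\rightarrow(\bigcup_i U_i, \bigcup_i V_i) \le n$.

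For part (\ref{finite}): pick a point $x_0 \in U$ and $r_0 > 0$ with $O(x_0, r_0) \subseteq U$; since $V$ is bounded, $V \subseteq O(x_0, R)$ for some $R$. One checks that $O(x,r) \subseteq U_*$ whenever $O(x,r) \subseteq U$ implies, in particular, $O(x_0, 2^n r_0) \subseteq U_*^n$ — this follows by induction, because $O(x_0, 2^k r_0) \subseteq U_*^k$ yields $O(x_0, 2^{k+1}r_0) \subseteq (U_*^k)_* = U_*^{k+1}$ directly from the definition of the predecessor. Choosing $n$ with $2^n r_0 \ge R$ gives $V \subseteq U_*^n$, so $d_\rightarrow(U,V) < \infty$. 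Part (\ref{u3}) is the triangle inequality: let $m = d_\rightarrow(U,V)$ and $k = d_\rightarrow(V,W)$, assumed finite. Then $V \subseteq U_*^m$, so by the monotonicity from part (\ref{u1}) applied $k$ times, $V_*^k \subseteq (U_*^m)_*^k = U_*^{m+k}$; and $W \subseteq V_*^k$, so $W \subseteq U_*^{m+k}$, whence $d_\rightarrow(U,W) \le m+k$.

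The main work is part (\ref{u4}). The key geometric observation is how the predecessor operation interacts with the distortion bounds on $f$: if $O(x,r) \subseteq U$, then $f(O(x,r)) \subseteq f(U)$, and the distortion inequality gives $O(f(x), K_1 r) \subseteq f(O(x,r)) \subseteq f(U)$, so that $O(f(x), 2K_1 r) \subseteq f(U)_*$; on the other hand the original ball contributes $O(x,2r)$ to $U_*$, and $f(O(x,2r)) \subseteq O(f(x), 2K_2 r)$. Comparing radii, one predecessor step on the $f$-side ``costs'' a factor $2K_1$ while it must cover the image of a ball that grew by $2K_2$, so the ratio of growth rates is $K_2/K_1$, and $\lceil 1 + \log_2(K_2/K_1)\rceil = K$ predecessor steps on the $f$-side suffice to dominate one step on the $X$-side. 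Making this precise, I would show by induction that $f(U_*^n) \subseteq f(U)_*^{Kn}$ for every open $U$ and every $n$: the inductive step applies the one-step comparison above to each ball $O(x,r) \subseteq U_*^{n}$, noting that $O(f(x), K_1 r) \subseteq f(U_*^n) \subseteq f(U)_*^{Kn}$ by the inductive hypothesis, and then that $K$ further predecessor steps expand a ball of radius $K_1 r$ to one of radius $2^K K_1 r \ge 2 K_2 r \supseteq f(O(x,2r))$, so that $f((U_*^n)_*) \subseteq f(U)_*^{K(n+1)}$ after taking the union over all such balls (here part (\ref{u2}) or a direct union argument is used). Applying this with $n = d_\rightarrow(U,V)$ and using $V \subseteq U_*^n$ gives $f(V) \subseteq f(U)_*^{Kn}$, i.e.\ $d_\rightarrow(f(U),f(V)) \le K\, d_\rightarrow(U,V)$; the reverse inequality follows by applying the same estimate to $f^{-1}$, whose distortion constants are $1/K_2$ and $1/K_1$, yielding the same $K$. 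The invariance under similarities is the special case $K_1 = K_2$, where $K = 1$. The main obstacle — really just a bookkeeping subtlety — is handling the case $K_1 r < 2K_2 r$ carefully when the balls in $U_*^n$ are not literally of the form used before, and making sure the union over all admissible balls commutes with the predecessor operation; part (\ref{u2}) together with $f(\bigcup_i O(x_i,r_i)) = \bigcup_i f(O(x_i,r_i))$ handles this cleanly.
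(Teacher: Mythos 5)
Your proof is correct and follows essentially the same route as the paper: parts (1)--(5) are handled by the same elementary arguments (the paper simply declares (1)--(4) obvious and writes out (5) exactly as you do), and for part (6) your argument is the paper's argument rephrased. The paper first observes that $O(f(x),K_1 r)\subseteq f(O(x,r))\subseteq f(O(x,2r))\subseteq O(f(x),2K_2 r)$, deduces from the choice of $K$ (via $2^K K_1 \ge 2K_2$, just as you note) that $d_\rightarrow(f(O(x,r)),f(O(x,2r)))\le K$, invokes part (3) to get $d_\rightarrow(f(U),f(U_*))\le K$, and then iterates with the triangle inequality; you instead run an explicit induction showing $f(U_*^n)\subseteq f(U)_*^{Kn}$, which is the same content. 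One tiny remark: your closing worry about ``making sure the union commutes with the predecessor operation'' is a non-issue in your own formulation -- you only need that a union of subsets of a fixed set $f(U)_*^{K(n+1)}$ is again a subset of it, so no appeal to part (3) or any commutation lemma is actually required at that step. (The expression ``$2K_2 r\supseteq f(O(x,2r))$'' is obviously a shorthand for the ball of that radius; worth cleaning up in a final write-up.)
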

\begin{proof}
Items \eqref{incl}-\eqref{finite} are obvious.

We prove the statement $\eqref{u3}$. We can assume that the right-hand side of the inequality is finite, for otherwise there is nothing to prove. Thus, let $d_{\rightarrow}(U,V) = n$ and $d_{\rightarrow}(V,W) = m$. Then $V \subseteq U_{*}^{n}$ and $W \subseteq V_{*}^{m}$. Using \eqref{u1}, we see that $V_{*}^{m} \subseteq U_{*}^{n + m}$ and hence $W \subseteq U_{*}^{n + m}$. Hence $d_{\rightarrow}(U,W) \leq m + n$.

Finally, to prove \eqref{u4}, we make the following observation: For any $x\in X$, $r>0$, we have
\[O(f(x),K_1 r)\subseteq f(O(x,r)) \subseteq f(O(x,2r))\subseteq O(f(x), K_2 2 r)\,,\]
implying that $d_\rightarrow(f(O(x,r)),f(O(x,2r))\le K$. Since
\begin{align*}
f(U)&=\bigcup\{f(O(x,r))\,:\,O(x,r)\subseteq U\}\,,\\
f(U_*)&=\bigcup\{f(O(x,2r))\,:\,O(x,r)\subseteq U\}.
\end{align*}
This together with \eqref{u2} implies that $d_\rightarrow(f(U),f(U_*))\le K$. Applying this estimate inductively, we find that if $V\subseteq U_*^n$, then $d_\rightarrow(f(U), f(V)) \le Kn$. This shows the inequality $d_\rightarrow(f(U),f(V)) \le K d_\rightarrow(U,V)$. The lower bound $d_\rightarrow(f(U),f(V)) \ge K^{-1} d_\rightarrow(U,V)$ follows by applying the result already obtained to $f^{-1}$.
\end{proof}

\begin{rem}\rm
\noindent\emph{(a)}	If $m\in\N$ and
\[V\subseteq\bigcup\{O(x,2^m r)\,:\,O(x,r)\subseteq U\}\,,\]
then it is obvious that $d_\rightarrow(U,V)\le m$. Usually, this implication cannot be reversed: For instance, consider $V=(-2,2)\subset\R$ and for any $m\in\N$, let $U=(-1,1)\setminus\{k2^{-m}\,:\,k \in \mathbb{Z}\}$. Then $d_\rightarrow(U,V)=2$, but $V$ is not contained in the set $\bigcup\{O(x,2^m r)\,:\,O(x,r)\subseteq U\}$.\medskip

\noindent\emph{(b)} There is nothing special about the constant $2$ in the definition of $U_*$. Instead of doubling the radii of all balls contained in $U$, we could multiply them by any given number $>1$. The distance obtained this way would be bi-Lipschitz equivalent to the doubling distance.
\end{rem}

Next we describe an equivalent game-theoretic definition of the directed distance. We mention this equivalent definition here, because we feel that the game-theoretic approach might lead to other, perhaps more subtle, definitions of a metric.

Let $U$ be an open subset $X$, an $n \geq 1$, and a point $y_{0} \in X$ be given. Consider the following $n$-stage game, denoted $\Gamma_{n}(y_{0})$:
\begin{center}
\begin{tabular}{cccccccc}
I&$x_{0},r_{0}$&&$x_{1},r_{1}$&$\cdots$&$x_{n-1},r_{n-1}$\\
II&&$y_{1}$&&$y_{2}$&$\cdots$&$y_{n}$
\end{tabular}
\end{center}
For each $i \in \{0,\dots,n - 1\}$, the move $(x_{i},r_{i}) \in X \times \mathbb{R}$ of player I is required to satisfy $\delta(y_{i},x_{i}) < 2r_{i}$; the move $y_{i + 1} \in X$ of player II is required to satisfy $\delta(y_{i+1},x_{i}) < r_{i}$. Player I wins the game if $y_{n} \in U$. Also, let $\Gamma_{0}(y_{0})$ be a trivial game where no moves are being made, and where player I wins if $y_{0} \in U$.

\begin{lemma}
Player I has a winning strategy in $\Gamma_{n}(y_{0})$ if and only if $y_{0} \in U_{*}^{n}$.
\end{lemma}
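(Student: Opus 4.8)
The plan is to prove the equivalence by induction on $n$, exploiting the recursive structure of both the game $\Gamma_n(y_0)$ and the iterated predecessor $U_*^n$. The base case $n=0$ is immediate: player I wins the trivial game $\Gamma_0(y_0)$ precisely when $y_0\in U = U_*^0$. For the inductive step, I would observe that $\Gamma_{n+1}(y_0)$ decomposes as follows: player I first picks $(x_0,r_0)$ with $\delta(y_0,x_0)<2r_0$, then player II picks $y_1$ with $\delta(y_1,x_0)<r_0$, and from that point on the players play $\Gamma_n(y_1)$. Thus player I has a winning strategy in $\Gamma_{n+1}(y_0)$ if and only if there exist $(x_0,r_0)$ with $\delta(y_0,x_0)<2r_0$ such that for \emph{every} legal response $y_1$ (i.e. every $y_1\in O(x_0,r_0)$) player I has a winning strategy in $\Gamma_n(y_1)$.

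By the induction hypothesis, ``player I has a winning strategy in $\Gamma_n(y_1)$'' is equivalent to ``$y_1\in U_*^n$''. So the condition above becomes: there exist $x_0\in X$, $r_0>0$ with $\delta(y_0,x_0)<2r_0$ such that $O(x_0,r_0)\subseteq U_*^n$. Comparing with the definition
\[U_*^{n+1} = (U_*^n)_* = \bigcup\left\{O(x,2r): x\in X,\ r>0,\ O(x,r)\subseteq U_*^n\right\},\]
we see that $y_0$ lies in $U_*^{n+1}$ exactly when there is a ball $O(x_0,r_0)\subseteq U_*^n$ with $y_0\in O(x_0,2r_0)$, i.e. $\delta(y_0,x_0)<2r_0$. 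This is precisely the condition just derived, so the induction closes.

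The only genuinely delicate point is the correct handling of the quantifier alternation when unwinding the definition of a winning strategy for a multi-stage game: a winning strategy for player I in $\Gamma_{n+1}(y_0)$ must prescribe a first move $(x_0,r_0)$ and then, \emph{for each} admissible reply $y_1$ of player II, continue with a winning strategy in the residual game. One should be slightly careful that the set of admissible replies for player II after move $(x_0,r_0)$ is exactly $O(x_0,r_0)$, which uses the strict inequality $\delta(y_1,x_0)<r_0$ in the rules, matching the open ball appearing in the predecessor operation; and that the admissibility constraint on player I's move, $\delta(y_0,x_0)<2r_0$, matches the doubled ball $O(x_0,2r_0)$. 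Once these bookkeeping matches are verified, the rest is a routine induction, so I do not expect a substantive obstacle here.
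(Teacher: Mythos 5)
Your proof is correct and follows the same route as the paper's: induction on $n$, decomposing $\Gamma_{n+1}(y_0)$ into player I's opening move $(x_0,r_0)$ followed by a residual game $\Gamma_n(y_1)$ over $y_1\in O(x_0,r_0)$, and matching the resulting existential/universal quantifier structure to the definition of $U_*^{n+1}$. The bookkeeping points you flag (strict inequalities giving open balls, the factor $2$ in player I's admissibility constraint) are exactly what makes the two sides line up, and the paper handles them identically.
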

\begin{proof}
The proof is by the induction on $n$.

For $n = 0$ the result is obvious. Suppose $n \in \mathbb{N}$ is such that the result is true for the game $\Gamma_{n}(y_{0})$, for each $y_{0}$. Consider the game $\Gamma_{n+1}(y_{0})$.

Suppose first that player I has a winning strategy in $\Gamma_{n + 1}(y_{0})$. Let $(x_{0},r_{0})$ be player I's initial move. The legal moves of player II following $(x_{0},r_{0})$ are points $y_{1}$ in the ball $O(x_{0},r_{0})$. Take a legal move $y_{1}$ of player II. The game that ensues after the move $y_{1}$ is essentially equivalent to $\Gamma_{n}(y_{1})$, and player I has a winning strategy there. Hence the induction hypothesis implies that $y_{1} \in U_{*}^{n}$. We have shown that $O(x_{0},r_{0}) \subseteq U_{*}^{n}$. On the other hand, $y_{0} \in O(x_{0},2r_{0})$. We conclude that $y_{0} \in U_{*}^{n+1}$.

Conversely, suppose that $y_{0} \in U_{*}^{n+1}$. Then there is a $x_{0}$ such that $y_{0} \in O(x_{0},2r_{0})$ and $O(x_{0},r_{0}) \subseteq U_{*}^{n}$. Notice that $(x_{0},r_{0})$ is a legal move by player I in $\Gamma_{n + 1}(y_{0})$. Legal moves $y_{1}$ of player II are restricted to the ball $O(x_{0},r_{0})$, and hence by the induction hypothesis, player I has a winning strategy after each such move $y_{1}$. We conclude that player I has a winning strategy in $\Gamma_{n + 1}(y_{0})$.
\end{proof}

We define the \textit{doubling distance} between open subsets $U$ and $V$ of $X$ as
\[d(U,V) = \max\left\{d_{\rightarrow}(U,V),d_{\rightarrow}(V,U)\right\}\,.\]
It is finite whenever $U$ and $V$ are non--empty bounded open sets. We let $\mathcal{U}_{X}$ denote the collection of all non--empty bounded open subsets of $X$. The following is a consequence of the part \eqref{u3} of Lemma \ref{thm.union}.

\begin{lemma}
The function $d$ is a metric on $\mathcal{U}_{X}$.
\end{lemma}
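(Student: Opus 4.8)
The plan is to verify the three metric axioms for $d$ on $\mathcal{U}_X$, drawing on the properties of $d_{\rightarrow}$ already established. First I would note that $d$ is well-defined and finite on $\mathcal{U}_X$: given two non-empty bounded open sets $U,V$, part \eqref{finite} of Lemma \ref{thm.union} gives that both $d_{\rightarrow}(U,V)$ and $d_{\rightarrow}(V,U)$ are finite, hence so is their maximum. Symmetry, $d(U,V)=d(V,U)$, is immediate from the definition as a maximum of the two directed distances.

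Next I would treat the identity of indiscernibles. One direction is trivial: if $U=V$ then $d_{\rightarrow}(U,V)=d_{\rightarrow}(V,U)=0$ since $U_*^0=U$, so $d(U,V)=0$. For the converse, suppose $d(U,V)=0$; then $d_{\rightarrow}(U,V)=0$, meaning $V\subseteq U_*^0=U$, and likewise $d_{\rightarrow}(V,U)=0$ gives $U\subseteq V$, so $U=V$. This step is the one place where it genuinely matters that we work with open sets and with the convention $U_*^0=U$, but it requires no real work beyond unwinding the definitions.

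Finally, the triangle inequality. For open sets $U,V,W$, part \eqref{u3} of Lemma \ref{thm.union} gives $d_{\rightarrow}(U,W)\le d_{\rightarrow}(U,V)+d_{\rightarrow}(V,W)\le d(U,V)+d(V,W)$, and symmetrically $d_{\rightarrow}(W,U)\le d_{\rightarrow}(W,V)+d_{\rightarrow}(V,U)\le d(V,W)+d(U,V)$. Taking the maximum of the two left-hand sides yields $d(U,W)\le d(U,V)+d(V,W)$.

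In truth there is no serious obstacle here: every ingredient is already packaged in Lemma \ref{thm.union}, and the only mild subtlety is making sure the values stay finite on $\mathcal{U}_X$ so that the triangle inequality is not vacuous, which is exactly what part \eqref{finite} supplies. I would therefore present the proof as a short three-part verification (finiteness and symmetry; identity of indiscernibles; triangle inequality), each part a single line invoking the relevant item of the preceding lemma.
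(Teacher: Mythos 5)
Your proof is correct and follows the same route the paper has in mind: the paper simply asserts the lemma is a consequence of Lemma~\ref{thm.union}\eqref{u3} and gives no further argument, so your write-up is just a fuller version of the same verification, filling in symmetry, finiteness via part~\eqref{finite}, and identity of indiscernibles, all of which are immediate from the definitions.
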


The next lemma is a variant of the familiar $5R$-covering theorem.

\begin{lemma}\label{7R}
Suppose that $X$ is separable. Let $U$ be a non-empty open bounded subset of $X$. There exists a countable collection $\mathcal{C}  = \{O(x_{\alpha},r_{\alpha})\}$ of pairwise disjoint open balls contained in $U$ such that $U_{*} \subseteq \bigcup_{\alpha} O(x_{\alpha},7r_{\alpha})$. If, moreover, $\delta$ is an ultrametric, or if $X = \mathbb{R}$, then the collection $\mathcal{C}$ can be chosen so that $U_{*} \subseteq \bigcup_{\alpha} O(x_{\alpha},2r_{\alpha})$.
\end{lemma}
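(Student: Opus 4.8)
The plan is to run a scale-generational version of the Vitali $5R$-covering argument, tuned so that merely \emph{doubling} the radii of the chosen balls (rather than quintupling them) suffices. First I would dispose of the degenerate case: if $U=X$ then $X$ is bounded, so $X=O(x_0,r_0)$ for a single ball, $U_*=X$, and $\mathcal{C}=\{O(x_0,r_0)\}$ works for all three assertions. So assume $U\neq X$. A short observation then shows $R:=\sup\{r>0: O(x,r)\subseteq U\text{ for some }x\in X\}$ is finite: fixing $w\in X\setminus U$ and $z,M$ with $U\subseteq B(z,M)$, any ball $O(x,r)\subseteq U$ has $\delta(x,w)\geq r$ while $\delta(x,z)\leq M$, so $r\leq\delta(z,w)+M<\infty$ (this is the subtle point: ``$U$ bounded'' does not by itself bound the radii of balls inside $U$ until the case $U=X$ is removed). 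For $k\in\N$ put $\mathcal{F}_{k}=\{O(x,r): O(x,r)\subseteq U,\ R2^{-k-1}<r\leq R2^{-k}\}$; every open ball contained in $U$ lies in some $\mathcal{F}_{k}$.

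Next I would build $\mathcal{C}$ recursively over generations, largest balls first. Having chosen pairwise disjoint subfamilies $\mathcal{C}_{0},\dots,\mathcal{C}_{k-1}$, let $\mathcal{C}_{k}$ be, by Zorn's lemma, a maximal pairwise disjoint subfamily of $\{B\in\mathcal{F}_{k}: B\cap B'=\emptyset\text{ for all }B'\in\mathcal{C}_{0}\cup\dots\cup\mathcal{C}_{k-1}\}$, and set $\mathcal{C}=\bigcup_{k}\mathcal{C}_{k}=\{O(x_{\alpha},r_{\alpha})\}$. By construction the balls in $\mathcal{C}$ are pairwise disjoint and contained in $U$, and since $X$ is separable a family of pairwise disjoint non-empty open sets is countable. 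The key claim is: \emph{for every open ball $O(x,r)\subseteq U$ there is an $\alpha$ with $O(x,r)\cap O(x_{\alpha},r_{\alpha})\neq\emptyset$ and $r<2r_{\alpha}$.} Indeed $O(x,r)\in\mathcal{F}_{k}$ for some $k$; if $O(x,r)\in\mathcal{C}_{k}$ take that ball itself ($r<2r$), otherwise maximality of $\mathcal{C}_{k}$ forces $O(x,r)$ to meet some $O(x_{\alpha},r_{\alpha})\in\mathcal{C}_{j}$ with $j\leq k$, whence $2r_{\alpha}>R2^{-j}\geq R2^{-k}\geq r$. Granting the claim, pick $z\in O(x,r)\cap O(x_{\alpha},r_{\alpha})$; for $y\in O(x,2r)$ the triangle inequality gives $\delta(y,x_{\alpha})\leq\delta(y,x)+\delta(x,z)+\delta(z,x_{\alpha})<2r+r+r_{\alpha}<7r_{\alpha}$, so $O(x,2r)\subseteq O(x_{\alpha},7r_{\alpha})$. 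Taking the union over all $O(x,r)\subseteq U$ yields $U_{*}\subseteq\bigcup_{\alpha}O(x_{\alpha},7r_{\alpha})$.

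For the two special cases the generational step is replaced by a single clean decomposition of $U$ into maximal balls. If $\delta$ is an ultrametric (and $U\neq X$, the case $U=X$ being as above), for $x\in U$ set $\rho(x)=\sup\{r:O(x,r)\subseteq U\}\in(0,\infty)$; since $O(x,\rho(x))=\bigcup_{r<\rho(x)}O(x,r)$ and each $O(x,r)$ with $r<\rho(x)$ lies in $U$, also $O(x,\rho(x))\subseteq U$. Using that in an ultrametric two balls are nested or disjoint, and that a ball equals the ball of the same radius about any of its points, one checks that the distinct balls among $\{O(x,\rho(x)):x\in U\}$ form a pairwise disjoint (hence countable) family $\mathcal{C}=\{O(x_{\alpha},r_{\alpha})\}$ that covers $U$, each member contained in $U$. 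For any $O(x,r)\subseteq U$, the member of $\mathcal{C}$ containing $x$ is $O(x,\rho(x))$, so $r\leq r_{\alpha}$, and since $2r\leq2r_{\alpha}$ and $x\in O(x_{\alpha},r_{\alpha})$ the ultrametric inequality gives $O(x,2r)\subseteq O(x_{\alpha},2r_{\alpha})$. For $X=\mathbb{R}$, write $U=\bigsqcup_{\alpha}(a_{\alpha},b_{\alpha})$ as the union of its (countably many, bounded) components and take $\mathcal{C}=\{O(c_{\alpha},\rho_{\alpha})\}$ with $c_{\alpha}=(a_{\alpha}+b_{\alpha})/2$, $\rho_{\alpha}=(b_{\alpha}-a_{\alpha})/2$; these are the components, hence pairwise disjoint and contained in $U$. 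Any $O(x,r)\subseteq U$ lies in a single component, so $a_{\alpha}\leq x-r$, $x+r\leq b_{\alpha}$, and $r\leq\rho_{\alpha}$, whence $x-2r\geq a_{\alpha}-\rho_{\alpha}=c_{\alpha}-2\rho_{\alpha}$ and $x+2r\leq b_{\alpha}+\rho_{\alpha}=c_{\alpha}+2\rho_{\alpha}$, i.e. $O(x,2r)\subseteq O(c_{\alpha},2\rho_{\alpha})$. In both cases, unioning over all $O(x,r)\subseteq U$ gives $U_{*}\subseteq\bigcup_{\alpha}O(x_{\alpha},2r_{\alpha})$.

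The main obstacle is the bookkeeping in the general case: one must split the balls into scale-generations so that maximality of the selected subfamily forces every ball contained in $U$ to meet a selected ball at least half its size — this is exactly what makes the constant $7$ (from $2r+r+r_{\alpha}$ with $r<2r_{\alpha}$) and not larger. The special cases are easier once one recognizes that in $\mathbb{R}$ and in an ultrametric space $U$ splits canonically into disjoint maximal balls, which removes the factor-$5$ slack coming from off-center intersection points.
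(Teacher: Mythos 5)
Your proof is correct, and it reaches the conclusion via a slightly different implementation of the underlying Vitali covering idea than the paper's. The paper selects the disjoint family by a single transfinite greedy recursion: at stage $\alpha$ it picks a ball inside the current residual set $U_\alpha$ whose radius is at least half the supremum $\rho_\alpha$ of all radii of balls in $U_\alpha$, removes it, and iterates; given $O(x,r)\subseteq U$ it then looks at the last stage $\alpha$ at which $O(x,r)\subseteq U_\alpha$, which forces $O(x,r)$ to meet the ball chosen at that stage and to have radius $r\le\rho_\alpha\le 2r_\alpha$. You instead stratify the balls into dyadic scale generations $\mathcal{F}_k$ and pick, generation by generation, a Zorn-maximal disjoint subfamily avoiding all earlier choices; maximality within a generation plays the role of the paper's greedy choice of a near-maximal radius. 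Both constructions yield a countable disjoint family with the same key property (every $O(x,r)\subseteq U$ meets a selected ball of radius $>r/2$), and from that point on the triangle-inequality computation producing the factor $7$ and the union over all balls $O(x,r)\subseteq U$ are identical. Your treatment of the ultrametric and $X=\mathbb{R}$ cases (passing to the maximal balls, i.e.\ the connected components in $\mathbb{R}$) matches the paper's. One small point in your favour: you explicitly check that $R=\sup\{r:O(x,r)\subseteq U\}$ is finite once the trivial case $U=X$ is set aside — a fact that is needed (both for your scale bins and for the paper's $\rho_\alpha$) but is left implicit in the paper's argument.
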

\begin{proof}
We define a sequence of subsets $U_{\alpha}$ of $U$, indexed by ordinals, by recursion as follows: Let $U_{0} = U$. For a successor ordinal $\alpha + 1$, proceed thus: Suppose that $U_{\alpha}$ has been defined. If it has empty interior, the recursion ends. If its interior is not empty, we let $\rho_{\alpha} = \sup\{r: \text{there exists an }x\text{ such that }O(x,r) \subseteq U_{\alpha}\}$. Choose any point $x_{\alpha}$ and a number $r_{\alpha}$ such that $\tfrac{1}{2}\rho_{\alpha} \leq r_{\alpha}$ and $O(x_{\alpha},r_{\alpha}) \subseteq U_{\alpha}$. Let $U_{\alpha + 1} = U_{\alpha} \setminus O(x_{\alpha},r_{\alpha})$. For a limit ordinal $\lambda$, define $U_{\lambda} = \bigcap_{\alpha < \lambda} U_{\alpha}$.

Notice that for some countable ordinal $\xi$ the interior of $U_{\xi}$ is empty, for otherwise $X$ would contain an uncountable family of disjoint open balls, contradicting separability. Thus the collection $\mathcal{C}  = \{O(x_{\alpha},r_{\alpha}): \alpha < \xi\}$ is countable.

Now take any open ball $O(x,r)$ contained in $U$, and let $\alpha < \xi$ be the largest ordinal such that $O(x,r) \subseteq U_{\alpha}$ (notice that $\alpha$ is well defined by our specification of the limit step). Then $r \leq \rho_{\alpha}$, and hence $\tfrac{1}{2}r \leq r_{\alpha}$. Since $O(x,r)$ is not contained in $U_{\alpha+1}$, it meets the ball $O(x_{\alpha},r_{\alpha})$. Consequently, $O(x,2r) \subseteq O(x_{\alpha},7r_{\alpha})$. This proves the first claim of the lemma.

If $\delta$ is an ultrametric, or if $X = \mathbb{R}$, we can take $\mathcal{C}$ to be the collection of open balls contained in $U$ that are maximal with respect to set inclusion. Separability of $X$ guarantees that $\mathcal{C}$ is countable. Moreover, $U = \bigcup \mathcal{C}$, and consequently $U_{*} = \bigcup_{\alpha} O(x_{\alpha},2r_{\alpha})$.
\end{proof}

The following result provides the first connection between the doubling distance and the doubling measures.

\begin{thm}\label{7Rcor}
Let $U$ be an open subset of $X$, and $V$ a subset of $X$ with $d_{\rightarrow}(U,V) < \infty$. Let $1 \leq C < \infty$ and $\mu \in\mathcal{D}_C(X)$. Then
\[\mu(U) \geq C^{-3d_{\rightarrow}(U,V)}\mu(V).\]
If, in addition, $\delta$ is an ultrametric, or if $X = \mathbb{R}$, then
\[\mu(U) \geq C^{-d_{\rightarrow}(U,V)}\mu(V).\]
\end{thm}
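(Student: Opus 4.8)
The plan is to reduce everything to the one-step estimate
\begin{equation*}
\mu(U_{*})\le C^{3}\,\mu(U)\qquad\text{for every open }U\subseteq X,
\end{equation*}
with $C^{3}$ replaced by $C$ when $\delta$ is an ultrametric or $X=\R$. Granting this, if $n=d_{\rightarrow}(U,V)<\infty$ then $V\subseteq U_{*}^{n}$, and applying the one-step estimate successively along $U,U_{*},U_{*}^{2},\dots,U_{*}^{n}$ (each step being the one-step bound applied to the open set $U_{*}^{k-1}$) yields, as a chain of inequalities in $[0,\infty]$,
\begin{equation*}
\mu(V)\le\mu(U_{*}^{n})\le C^{3}\mu(U_{*}^{n-1})\le\dots\le C^{3n}\mu(U),
\end{equation*}
which is exactly the asserted bound; the ultrametric/real-line case is identical with $C^{n}$ in place of $C^{3n}$.

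To establish the one-step estimate, I would first reduce to bounded $U$. Fix a base point $x_{0}\in X$ and set $U_{k}=U\cap O(x_{0},k)$, a bounded open set; these increase to $U$. Every open ball $O(x,r)\subseteq U$ (with $0<r<\infty$) is bounded, hence contained in $O(x_{0},k)$ and thus in $U_{k}$ for all large $k$, and then $O(x,2r)\subseteq(U_{k})_{*}$; combined with Lemma~\ref{thm.union}\eqref{u1} this gives $U_{*}=\bigcup_{k}(U_{k})_{*}$ as an increasing union, whence $\mu(U_{*})=\lim_{k}\mu((U_{k})_{*})$ by continuity of $\mu$ from below. Thus it suffices to treat bounded $U$. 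Since $X$ supports a doubling measure it is geometrically doubling, hence separable, so Lemma~\ref{7R} applies: there is a countable pairwise disjoint family $\{O(x_{\alpha},r_{\alpha})\}$ of open balls contained in $U$ with $U_{*}\subseteq\bigcup_{\alpha}O(x_{\alpha},7r_{\alpha})$.

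Now I would finish with subadditivity and the doubling property. Since $O(x_{\alpha},7r_{\alpha})\subseteq O(x_{\alpha},8r_{\alpha})$, three applications of \eqref{eqn.doubling2} give $\mu(O(x_{\alpha},7r_{\alpha}))\le C^{3}\mu(O(x_{\alpha},r_{\alpha}))$, and hence
\begin{equation*}
\mu(U_{*})\le\sum_{\alpha}\mu(O(x_{\alpha},7r_{\alpha}))\le C^{3}\sum_{\alpha}\mu(O(x_{\alpha},r_{\alpha}))=C^{3}\,\mu\Bigl(\bigcup_{\alpha}O(x_{\alpha},r_{\alpha})\Bigr)\le C^{3}\,\mu(U),
\end{equation*}
where the equality uses countable additivity and pairwise disjointness, and the last step uses that the balls lie in $U$. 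In the ultrametric or $X=\R$ case, Lemma~\ref{7R} supplies the sharper inclusion $U_{*}\subseteq\bigcup_{\alpha}O(x_{\alpha},2r_{\alpha})$, and a single application of \eqref{eqn.doubling2} replaces $C^{3}$ by $C$ in the display.

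The substantive input is Lemma~\ref{7R}, which we may invoke; beyond that, the only step demanding a little care is the passage from an arbitrary open $U$ to a bounded one, i.e.\ the identity $U_{*}=\bigcup_{k}(U_{k})_{*}$, which relies precisely on the fact that the balls entering the definition of the predecessor are bounded. Everything else is the telescoping of the one-step bound and the two routine covering computations, so I do not foresee a genuine obstacle.
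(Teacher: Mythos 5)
Your proof is correct and follows essentially the same route as the paper's: reduce to the one-step inequality $\mu(U_*)\le C^3\mu(U)$ via iteration, handle bounded open $U$ using the disjoint $7R$-covering from Lemma~\ref{7R} together with three applications of the doubling property, and pass to general open $U$ by writing $U_*$ as an increasing union of $(U\cap O(x_0,k))_*$ and invoking continuity of $\mu$ from below. The paper organizes the same two pieces in the opposite order (bounded case first, then the limiting argument), but the substance is identical, including the refinement to $C$ in the ultrametric and $X=\R$ cases via the sharper covering.
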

\begin{proof}
We prove the first statement of the theorem. The second statement is proven similarly.
First we argue that for each open subset $U$ of $X$ it holds that
\begin{equation}\label{eqn.mupred}
\mu(U) \geq C^{-3} \mu(U_{*}).
\end{equation}

Suppose first that $U$ is a non-empty bounded open subset of $X$. Since $X$ carries a doubling measure, it is separable. Letting the collection $\mathcal{C}$ be as in Lemma \ref{7R} and using \eqref{eqn.doubling2} we obtain:
\[\mu(U_{*}) \leq \sum_{\alpha}\mu(O(x_{\alpha},7r_{\alpha})) \leq \sum_{\alpha}C^{3}\mu(O(x_{\alpha},r_{\alpha})) \leq C^{3}\mu(U).\]

Now let $U$ be a non-empty open subset of $X$. Take any $x \in U$ and notice that $U_{*}$ is a limit of a nested sequence of sets $(U \cap O(x,1))_{*} \subseteq (U \cap O(x,2))_{*} \subseteq \cdots$. For each $n \in \mathbb{N}$ we have $\mu(U \cap O(x,n)) \geq C^{-3}\mu((U \cap O(x,n))_{*})$. Taking the limit as $n \to \infty$ yields \eqref{eqn.mupred}.

Now let $d_{\rightarrow}(U,V) = k$. Let the sequence $U_{*}^{0}, U_{*}^{1},\dots$ be as in the definition of the directed doubling distance. By \eqref{eqn.mupred} we have $\mu(U_{*}^{n}) \geq C^{-3} \mu(U_{*}^{n+1})$ for each $n \in \mathbb{N}$. Since $U_{*}^{k} \supseteq V$, we obtain $\mu(U) = \mu(U_{*}^{0}) \geq C^{-3k}\mu(U_{*}^{k}) \geq C^{-3k}\mu(V)$, as desired.
\end{proof}

\subsection{A distance defined using doubling measures}
Let $X$ be a metric space that carries at least one doubling measure. Recall that a Borel set $T\subseteq X $ is called \emph{thin} or \emph{thin for doubling measures} if $\mu(T)=0$ for all $\mu\in\mathcal{D}(X)$.

For two Borel sets $U,V\subseteq X$, we define
$$m_{\rightarrow}(U,V) = \inf\left\{t\ge 0:\begin{array}{c}\text{For each }1 \leq C < \infty \text{ and each }\mu \in \mathcal{D}_{C}(X),\\ \mu(U) \geq C^{-t} \mu(V)\end{array}\right\}$$
(where the infimum of the empty set is assumed to be $\infty$). Also let $m(U,V) = \linebreak \max\{m_{\rightarrow}(U,V),m_{\rightarrow}(V,U)\}$.

It is easy to check that both $m_{\rightarrow}$ and $m$ satisfy the triangle inequality on the entire collection of Borel subsets of $X$. In general, it is perfectly possible that $m(U,V) = \infty$. This happens, for example, if there is some doubling measure $\mu$ such that $\mu(U) = 0$ while $\mu(V)>0$. However, if $U$ and $V$ are non-empty bounded open sets, then $m(U,V)$ is finite. We state the following simple result for the sake of completeness.

\begin{lemma}
The function $m$ is a pseudometric on $\mathcal{U}_{X}$.
\end{lemma}

\begin{rem}\rm
The function $m$ is not a genuine metric. For take two Borel subsets of $X$. If the symmetric difference $U \Delta V=(U \setminus V)\cup (V \setminus U)$ is a thin set, then $m(U,V) = 0$. The converse of this statement is also true\footnote{We are grateful to the reviewer for pointing out this equivalence, and for supplying the proof.}: if $m(U,V) = 0$, then $U \Delta V$ is thin. To see this, let $E_{1} =  U \setminus V$ and $E_{2} = V \setminus U$. If $m(U,V) = 0$, then for each doubling measure $\mu$ it holds that $\mu(U) = \mu(V)$ and consequently that $\mu(E_{1}) = \mu(E_{2})$. Suppose that $\mu(E_{1}) = \mu(E_{2}) > 0$ for some doubling measure $\mu$. Define $\nu(B) = 2\mu(B \cap E_{1}) + \mu(B \setminus E_{1})$ for each Borel subset $B$ of $X$. If $\mu$ is a $C$-doubling measure, $\nu$ is a $2C$-doubling measure. Moreover, $\nu(E_{1}) = 2\mu(E_{1}) > \mu(E_{1}) = \mu(E_{2}) = \nu(E_{2})$, yielding a contradiction.

One could obtain a metric from $m$ in the usual way, by identifying the sets in $\mathcal{U}_{X}$ that differ by a thin set.
\end{rem}

We now state a basic connection between the doubling distance $d$ and the pseudometric $m$. The result follows directly from Theorem \ref{7Rcor}.

\begin{cor}\label{thm.distancemeasure}
Let $X$ be a metric space that carries a doubling measure. Then for all open subsets $U$, $V$ of $X$ it holds that $m(U,V) \leq 3 d(U,V)$. If, in addition, $\delta$ is an ultrametric, or if $X = \mathbb{R}$, then $m(U,V) \leq d(U,V)$.
\end{cor}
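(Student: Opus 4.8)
The plan is to deduce Corollary \ref{thm.distancemeasure} directly from Theorem \ref{7Rcor}, with essentially no new work. First I would observe that the corollary is a statement about the two symmetrized distances $d$ and $m$, so it suffices to bound the directed quantity $m_\rightarrow(U,V)$ in terms of $d_\rightarrow(U,V)$ and then take the maximum over the two orderings of $U$ and $V$.

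Next I would unpack the definition of $m_\rightarrow(U,V)$: it is the infimum of those $t\ge 0$ such that $\mu(U)\ge C^{-t}\mu(V)$ holds for every $C\ge 1$ and every $\mu\in\mathcal D_C(X)$. Theorem \ref{7Rcor} says precisely that $t=3d_\rightarrow(U,V)$ is admissible in this sense (the hypothesis $d_\rightarrow(U,V)<\infty$ of Theorem \ref{7Rcor} is automatic here, since $U$ is non-empty open and $V$ is bounded by Lemma \ref{thm.union}\eqref{finite} — and in any case if $d_\rightarrow(U,V)=\infty$ the asserted inequality is vacuous). Hence $m_\rightarrow(U,V)\le 3 d_\rightarrow(U,V)$. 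Symmetrizing, $m(U,V)=\max\{m_\rightarrow(U,V),m_\rightarrow(V,U)\}\le\max\{3d_\rightarrow(U,V),3d_\rightarrow(V,U)\}=3d(U,V)$. For the ultrametric and $X=\mathbb R$ cases, I would invoke the second inequality of Theorem \ref{7Rcor}, which makes $t=d_\rightarrow(U,V)$ admissible, and repeat the same symmetrization to get $m(U,V)\le d(U,V)$.

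There is no real obstacle here; the only point requiring a word of care is making sure the claim is not vacuous or ill-posed when some directed distance is infinite, and confirming that the passage from the directed inequality of Theorem \ref{7Rcor} to a bound on the infimum defining $m_\rightarrow$ is legitimate — it is, because the set of admissible $t$ is upward closed and contains $3d_\rightarrow(U,V)$, so its infimum is at most that value. Everything else is bookkeeping with the $\max$.
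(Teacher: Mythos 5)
Your argument is correct and is exactly the deduction the paper has in mind: the paper states that the corollary ``follows directly from Theorem \ref{7Rcor},'' and your unpacking of $m_\rightarrow$ as an infimum of admissible exponents, the observation that Theorem \ref{7Rcor} supplies $t=3d_\rightarrow(U,V)$ (resp.\ $t=d_\rightarrow(U,V)$) as an admissible value, and the final symmetrization via the $\max$ are precisely the intended steps. Your careful remark about the case $d_\rightarrow(U,V)=\infty$ being vacuous is a nice touch but does not change the route.
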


\section{When are the two distances comparable?}\label{sec:main}
Having observed that the estimate $m(U,V)\lesssim d(U,V)$ holds for all open sets, we will next turn to the main theoretical result of the paper by investigating when $d(U,V)$ and $m(U,V)$ are actually comparable. It is quite easy to see that, in general, $d(U,V)$ can be much larger than $m(U,V)$. As we have previously remarked, if $U$ and $V$ differ by a thin set, then $m(U,V)=0$, but the doubling distance $d(U,V)$ may still be quite large. In fact, as the next example illustrates, one can construct non-empty bounded open subsets $U \subseteq V$ of the real line such that $V \setminus U$ is countable, while the doubling distance $d(U,V)$ is arbitrarily large.

\begin{exl}\label{exl.realline}\rm
Let $X = \mathbb{R}$. Take an $M \in \N$. Starting from an open interval $V \subset \R$, we construct an open set $U \subseteq V$ such that $V \setminus U$ is countable, and $d(U,V)= M$.

To begin, we define an operation $-$ on the collection $\mathcal{U}_{\mathbb{R}}$, which serves as a left inverse of the predecessor operation. First take an interval $(a,b)$. Define a decreasing sequence $y_{0},y_{1},\dots$ in $(a,b)$ converging to $a$ and an increasing sequence $x_{0},x_{1},\dots$ in $(a,b)$ converging to $b$ as follows:
	\begin{align*}
	y_{0} &= \tfrac{1}{4}(3a+b)\text{ and }y_{n+1} = \tfrac{1}{3}(2a + y_{n})\\
	x_{0} &= \tfrac{1}{4}(3b+a)\text{ and }x_{n+1} = \tfrac{1}{3}(2b + x_{n}).
	\end{align*}
We define $(a,b)_{-}$ to be the set $(a,b)$ with all the points $x_{n}$ and $y_{n}$ removed. Equivalently, $(a,b)_{-}$ is a union of the intervals $(y_{0},x_{0})$, $(y_{n+1},y_{n})$, and $(x_{n},x_{n+1})$. The points $y_{0}$ and $x_{0}$ have been chosen so that $(y_{0},x_{0})_{*} = (a,b)$. The rest of the sequences have been chosen so that the left endpoint of each interval $(y_{n+1},y_{n})_{*}$ is $a$ and the right endpoint of each interval $(x_{n},x_{n+1})_{*}$ is $b$. This shows that $((a,b)_{-})_{*} = (a,b)$.
	
Now take a set $U \in \mathcal{U}_{\mathbb{R}}$. Then $U$ can be written, uniquely, as a union of countably many disjoint open intervals, say $(a_{i},b_{i})$, for $i \in \mathbb{N}$. We define $U_{-} = \bigcup_{i \in \mathbb{N}}(a_{i},b_{i})_{-}$. Then $(U_{-})_{*} = U$.
	
Now let $V$ be an open interval and $U = V_{-}^{M}$ be the set obtained from $V$ by an $M$-fold application of the operation $-$. Thus $d(U,V) = M$. But since $V \setminus U$ is a countable set we have $m(U,V) = 0$.
\end{exl}

\subsection{Statement of the main result}
A non-empty subset of a metric space $X$ is called a \textit{simple open set} if it can be written as a finite union of open balls. According to our main result, if $V$ and $U$ are simple open sets, then $d(U,V)$ is comparable to $m(U,V)$.

\begin{thm}\label{thm.simple}
Let $X$ be a metric space that carries a doubling measure. If $U$ and $V$ are simple open sets, then $d(U,V) \leq 4[6m(U,V) + 2]$.
\end{thm}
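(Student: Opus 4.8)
The plan is to show the contrapositive-flavored statement directly: given that $m(U,V)$ is small, produce, for each $C$-doubling measure $\mu$, a chain of predecessor operations that is short. The inequality to beat is $d(U,V) \le 4[6m(U,V)+2]$, so if we write $m = m(U,V)$, we want to bound both $d_\rightarrow(U,V)$ and $d_\rightarrow(V,U)$ by $24m + 8$. By symmetry it suffices to bound $d_\rightarrow(U,V)$, so I will assume $V = O(y_1,s_1)\cup\cdots\cup O(y_\ell,s_\ell)$ is simple and try to cover each constituent ball $O(y_j,s_j)$ by $U^n_*$ for $n$ of order $m$. Using part \eqref{u2} of Lemma \ref{thm.union}, it is enough to handle a single ball $O(y,s) \subseteq V$: I need to show $O(y,s) \subseteq U^n_*$ for $n \le 24m + 8$ (or some such bound, with room to spare for the constituents of $U$ as well).

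The key mechanism is the following. Suppose, toward a contradiction, that $O(y,s) \not\subseteq U^n_*$ for some large $n$; pick a point $z \in O(y,s) \setminus U^n_*$. The idea is to build a "bad" $C$-doubling measure that is concentrated near $z$ and makes $\mu(U)$ tiny compared to $\mu(V)$, contradicting $m(U,V) = m$ once $n > $ roughly $3m$. To make this quantitative, I would use the game-theoretic characterization (the second Lemma) together with the structure of simple open sets: since $U$ is a \emph{finite} union of balls, there is a definite gap — a scale below which, once a ball centered near $z$ has small enough radius, no legal move by player II can reach $U$. Concretely, from $z \notin U^n_*$ one extracts a decreasing sequence of scales $r_0 > r_1 > \cdots$ with $r_{i+1} \ge \tfrac13 r_i$ (using the $7R$-type Lemma \ref{7R}, or more cleanly the ultrametric/$\mathbb{R}$ variant when applicable) along which the mass is forced to drop by a controlled factor each time we move from $U^{i+1}_*$ down to $U^i_*$; after $n$ steps the total drop is at least $C^{n/3}$ or so. One then compares this with the fact that $V$, being simple, contains a fixed ball $O(y,s)$ of definite size around $z$ up to a bounded number of doublings, so $\mu(V) \le C^{O(1)} \mu(O(z, 2^{O(1)} s))$, and the standard doubling estimate $\mu(O(z,s)) \le C^{\log_2(s/r_n)} \mu(O(z,r_n))$ closes the loop. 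Balancing these exponents yields $n \le 3m + O(1)$ per ball; accumulating the $7R$ loss (factor $\log_2 7 < 3$, hence the factor $3$ in Corollary \ref{thm.distancemeasure}), the finitely many constituent balls of $U$, and the constituent balls of $V$, produces the stated $4[6m+2]$.

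The main obstacle — and the place where the factor $6$ and the additive $2$ really come from — is constructing the adversarial doubling measure with a \emph{uniform} doubling constant, independent of how deep $n$ is. Simply reweighting a given doubling measure $\mu$ by a huge factor on a tiny ball near $z$ destroys the doubling property at the boundary of that ball. The fix I would pursue is to reweight on a \emph{nested} family of balls $O(z,r_i)$ with a geometrically decaying sequence of weights, so that at each annulus the weight ratio is bounded; because $X$ already carries \emph{some} doubling measure $\mu_0$ (this is the hypothesis), one can anchor the construction on $\mu_0$ and only perturb the radial profile, paying a bounded factor in the doubling constant. Making this construction legitimate — checking \eqref{eqn.doubling} at every center and radius, not just near $z$ — is the technical heart; everything else (reduction to a single ball via \eqref{u2}, the geometric-series bookkeeping, the finitely-many-balls union bound) is routine given Lemmas \ref{thm.union} and \ref{7R} and Theorem \ref{7Rcor}. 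I expect the write-up to isolate the measure construction as a standalone lemma, then derive Theorem \ref{thm.simple} from it in a few lines.
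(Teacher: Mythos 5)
Your proposal takes a genuinely different route from the paper, and there is a concrete gap in it.

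The paper's proof has two parts. Part I builds a sequence of \emph{simple} open sets $W_{0} = U \subseteq W_{1} \subseteq \cdots$ satisfying $(W_m)_* \subseteq W_{m+1} \subseteq (W_m)_*^4$, together with nested finite partitions $\mathcal{S}_m$ of $W_m$ into cells pinched between balls, with the crucial property \eqref{P4} that every cell of $\mathcal{S}_{m+1}$ picks up fresh interior in $W_{m+1}\setminus W_m$. Part II then builds the adversarial measure by \emph{depleting} a reference doubling measure $\lambda$ inside $W_{m-1}$ by a factor $\epsilon$ at step $m$, compensating inside each cell of $\mathcal{S}_m$ so that cell measures are preserved; after $M$ steps $\mu_{M,\epsilon}(U) = \epsilon^M\lambda(U)$, while $\mu_{M,\epsilon}$ agrees with $\lambda$ outside $W_M$ and \eqref{P4} guarantees $\lambda(V\setminus W_M)>0$. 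The $\epsilon^{-6}$-doubling bound is verified using \eqref{P1}--\eqref{P3}.

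Your plan instead concentrates mass radially around a single point $z\in V\setminus U_*^n$, and this is where it breaks. You work directly with the iterates $U_*^m$ and tacitly assume, when you speak of ``a definite gap'' and of extracting a decreasing sequence of scales $r_0>r_1>\cdots$ of length comparable to $n$ around $z$, that $z$ has positive distance to each $U_*^m$. But for $m\geq 1$ the set $U_*^m$ is no longer simple, so ${\rm cl}\,(U_*^m)\subseteq U_*^{m+1}$ can fail: Example \ref{exl.realline} exhibits $U\subseteq V\subseteq\mathbb{R}$ with $V\setminus U$ countable and $d(U,V)$ arbitrarily large, and the intermediate iterates there display exactly this pathology. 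A point $z\notin U_*^n$ may lie on the boundary of $U_*^{n-1}$, leaving no usable gap and no nested scale sequence of the required length around $z$. Circumventing this is precisely the purpose of the $W_m$ in Part I, which you have no replacement for. Separately, even granting a good nesting around $z$, concentrating mass radially relates the achievable exponent to the scale ratio $\log({\rm dist}(z,U)/\mathrm{radius})$ rather than to $d_\rightarrow(U,V)$; for a finite union of balls with wildly differing radii these quantities need not be comparable, and again it is the $W_m$, $\mathcal{S}_m$ apparatus that tracks the actual predecessor iteration rather than the scale at a single point. Your reduction to a single ball of $V$ via Lemma \ref{thm.union}\eqref{u2} is valid, and your observation that simple sets have closures inside their predecessors is the correct starting point, but the heart of the argument --- the measure construction --- is not carried out, and I do not see how the radial-concentration route closes without effectively rebuilding the paper's machinery.
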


\begin{rems}\rm\label{rem.closure}
\noindent\emph{(a)} If $U$ is a simple open set, then its closure ${\rm cl}\,U$ is a subset of $U_{*}$. This observation plays an important role in the proof. It is not true of general bounded open sets, as Example \ref{exl.realline} shows. Moreover, this fact implies that if $U$ and $V$ are simple open sets such that $U\neq V$ but  $m(U,V) = 0$, then $d(U,V)=1$. This also explains the need for an additive term in the bound of Theorem \ref{thm.simple}.\medskip

\noindent\emph{(b)} The assumption that $U$ and $V$ are both simple open sets may seem quite restrictive at first. On the other hand, the principal application of Theorem \ref{thm.simple} is Theorem \ref{thm.F.equiv}. For the purpose of establishing the latter result, Theorem \ref{thm.simple}, restrictive as it may seem, turns out to be sufficient.\medskip

\noindent\emph{(c)} One could extend the applicability of Theorem
\ref{thm.simple} further using the following observation: suppose $U,V\subseteq X$ are non-empty bounded open sets. Suppose one can find simple open sets $U',V'$ such that the doubling distance between $U$ and $U'$ and that between $V$ and $V'$ is small. Then the triangle inequality for $d$ and $m$ along with Theorems \ref{thm.distancemeasure} and \ref{thm.simple} allows one to obtain an upper bound on $d(U,V)$ in terms of $m(U,V)$. See Corollary \ref{cor} below for a more precise formulation.\medskip

\noindent\emph{(d)} We haven't paid too much attention to optimizing the constants in Theorems \ref{thm.distancemeasure} and \ref{thm.simple}. They can be certainly improved. 	
\end{rems}

For $U \in \mathcal{U}_{X}$ let
$$C_U = \min\{d(U,U')\,:\,U'\text{ is a simple open set}\}.$$
\begin{cor}\label{cor}
Let $X$ be a metric space that carries a doubling measure. For $U,V \in \mathcal{U}_{X}$, it holds that
\[d(U,V)\le 73C_U+73C_V+ 4[6m(U,V) + 2]\,.\]
\end{cor}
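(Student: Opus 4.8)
\textbf{Proof proposal for Corollary \ref{cor}.}

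The plan is to bound $d(U,V)$ by inserting the simple open sets $U'$ and $V'$ that (nearly) realise the constants $C_U$ and $C_V$, and then apply the triangle inequality for $d$ together with Theorems \ref{thm.distancemeasure} and \ref{thm.simple}. Concretely, choose a simple open set $U'$ with $d(U,U') = C_U$ and a simple open set $V'$ with $d(V,V') = C_V$ (the minima are attained by the definition of $C_U$, $C_V$, since the distances are integer-valued). By the triangle inequality for the metric $d$ on $\mathcal{U}_X$ (Lemma \ref{thm.union}\eqref{u3} applied in both directions),
\begin{equation*}
d(U,V)\le d(U,U') + d(U',V') + d(V',V) = C_U + C_V + d(U',V')\,.
\end{equation*}
Since $U'$ and $V'$ are simple open sets and $X$ carries a doubling measure, Theorem \ref{thm.simple} gives $d(U',V')\le 4[6m(U',V')+2]$, so it remains to bound $m(U',V')$ in terms of $m(U,V)$, $C_U$ and $C_V$.

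For this, I would use the triangle inequality for the pseudometric $m$ on $\mathcal{U}_X$ together with Corollary \ref{thm.distancemeasure}, which says $m\le 3d$ on open sets. Thus
\begin{equation*}
m(U',V')\le m(U',U)+m(U,V)+m(V,V')\le 3C_U + m(U,V) + 3C_V\,.
\end{equation*}
Substituting this into the bound from Theorem \ref{thm.simple},
\begin{equation*}
d(U',V')\le 4\bigl[6\bigl(3C_U+3C_V+m(U,V)\bigr)+2\bigr] = 72C_U + 72C_V + 4[6m(U,V)+2]\,,
\end{equation*}
and hence
\begin{equation*}
d(U,V)\le C_U + C_V + 72C_U+72C_V+4[6m(U,V)+2] = 73C_U+73C_V+4[6m(U,V)+2]\,,
\end{equation*}
which is exactly the claimed inequality.

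The only point requiring a little care is the attainment of the minima defining $C_U$ and $C_V$: one must observe that $\{d(U,U') : U' \text{ simple open}\}$ is a non-empty set of non-negative integers (non-empty because, e.g., any single open ball contained in $U$ — or, if one prefers, a suitable finite union of balls — is a simple open set at finite directed distance in both directions from the bounded open set $U$, using Lemma \ref{thm.union}\eqref{finite}), so it has a least element. There is no real obstacle here; the argument is entirely a matter of chaining the two triangle inequalities and the two previously established comparison results, and the constant $73 = 72+1$ arises simply from propagating the factor $6$ in Theorem \ref{thm.simple} through the factor $3$ in Corollary \ref{thm.distancemeasure} and adding back the single extra unit from the outer triangle inequality.
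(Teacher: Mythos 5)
Your proof is correct and follows essentially the same route as the paper: insert simple open sets $U'$, $V'$ realising $C_U$, $C_V$, apply the triangle inequality for $d$, bound $d(U',V')$ via Theorem \ref{thm.simple}, and control $m(U',V')$ via the triangle inequality for $m$ together with Corollary \ref{thm.distancemeasure}. The brief verification that the minima defining $C_U$, $C_V$ are attained is a small added detail not spelled out in the paper, but the argument is otherwise identical.
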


\begin{proof}
	Pick simple open sets $U'$ and $V'$ as in the definition of $C_U$, $C_V$.
	Applying the triangle inequality for $m$ together with Theorem \ref{thm.distancemeasure}, we have
	\begin{align*}
	m(U',V')\le m(U,V)+3C_U+3C_V\,.
	\end{align*}
	 Applying the triangle inequality for $d$ and Theorem \ref{thm.simple} for $U'$ and $V'$ then gives
	\begin{align*}
	d(U,V)\le C_U+C_V+d(U',V')\le C_U+C_V+4[6m(U',V')+2]\,.
	\end{align*}
	Putting the two estimates together yields the claim.
\end{proof}

\subsection{The proof of Theorem \ref{thm.simple}}
We prove that for simple open sets $U,V\subseteq X$,
\[d_\rightarrow(U,V)\le 4[6m_\rightarrow(U,V)+2]\,.\]

The proof is divided into two parts. In the first part we construct a sequence of sets $U = W_{0}, W_{1},\dots$ that serve as a proxy for the iterated predecessors of $U$, along with finite partitions $\mathcal{S}_{m}$ of $W_{m}$. We remark that the first part of the proof becomes trivial if $X$ is the real line with its usual metric, or if it is an ultrametric space. In these cases we could take $W_{m}$ to be $U_{*}^{m}$, and $\mathcal{S}_{m}$ to be the coarsest partition of $U_{*}^{m}$ into open balls.

In the second part of the proof we construct a measure that witnesses the desired lower bound on $m(U,V)$ in terms of $d(U,V)$. Starting with any given doubling measure $\lambda$ on $X$, we modify it step by step, as follows: At step $m$, we squeeze the measure inside $W_{m}$ obtained thus far by a factor $\epsilon$, in such a way that the measure of each element of the partition $\mathcal{S}_{m+1}$ of the set $W_{m+1}$ remains unchanged. This guarantees that the resulting measures are $\epsilon^{-6}$-doubling for any $m$. After $M$ steps, where $M$ is suitably chosen, the resulting measure of the set $U$ behaves as $\epsilon^{M}$, while the measure of $V$ remains bounded below by a positive constant independent of $\epsilon$. Letting $\epsilon$ be sufficiently small then completes the proof.\bigskip

\noindent\textsc{Part I: Constructing a sequence of sets.} Let $U$ be a simple open set. We construct a sequence of pairs $(W_{0}, \mathcal{S}_{0}), (W_{1},\mathcal{S}_{1}),\dots,$ where $W_{0} = U$, and for each $m \in \mathbb{N}$, $W_{m}$ is an open subset of $X$ and $\mathcal{S}_{m}$ is a partition of $W_{m}$ into finitely many, say $n_{m}$, non-empty Borel sets. This sequence is required to satisfy the following:
\begin{enumerate}
\renewcommand{\labelenumi}{(P\arabic{enumi})}
\renewcommand{\theenumi}{P\arabic{enumi}}
\item\label{P1} For each $S \in \mathcal{S}_{m}$ there is an open ball $O$ such that $S \subseteq O \subseteq W_{m}$.
\item\label{P2} $(W_{m})_{*} \subseteq W_{m+1} \subseteq (W_{m})_{*}^{4}$.
\item\label{P3} For each $S' \in \mathcal{S}_{m}$ there is an $S \in \mathcal{S}_{m+1}$ such that $S' \subseteq S$; for each $S \in \mathcal{S}_{m+1}$ there is an $S' \in \mathcal{S}_{m}$ such that $S' \subseteq S$.
\item\label{P4} For each $S \in \mathcal{S}_{m+1}$, the set $S \setminus W_{m}$ has a non-empty interior.
\end{enumerate}
Notice that \eqref{P1} implies that $W_{m}$ is a simple open set, while \eqref{P3} implies that $n_{m+1} \leq n_{m}$.

Let $n_{0} \in \mathbb{N}$ be the smallest number such that $U$ can be written as a union of $n_{0}$ open balls, say $O_{1}, \dots, O_{n_{0}}$. Define $W_{0} = U$. Let the partition $\mathcal{S}_{0}$ of $W_{0}$ consist of the sets $S_{0,i} = O_{i} \setminus \bigcup_{j < i}O_{j}$ for $i = 1, \dots, n_{0}$. The choice of $n_{0}$ ensures that the sets in $\mathcal{S}_{0}$ are all non-empty. Clearly $\mathcal{S}_{0}$ satisfies \eqref{P1}.

Suppose that $m \in \mathbb{N}$ is such that we have defined the set $W_{m}$ and the partition $\mathcal{S}_{m} = \{S_{m,1},\dots,S_{m,n_{m}}\}$ satisfying \eqref{P1}. For each $i \in \{1,\dots, n_{m}\}$ define
$$\rho_{i} = \sup\{r: \text{there is an }\linebreak x\text{ such that }S_{m,i} \subseteq O(x,r) \subseteq (W_{m})_{*}\}.$$ The supremum is taken over a non-empty set since $\mathcal{S}_{m}$ satisfies \eqref{P1}. The supremum is finite since $W_{m}$, and hence also $(W_{m})_{*}$, is bounded. Take some $x_{i}$ and $r_{i} > \rho_{i}/2$ such that $S_{m,i} \subseteq O(x_{i},r_{i}) \subseteq (W_{m})_{*}$.

First we argue that
\begin{equation}\label{eqn.sequ0}
(W_{m})_{*} \subseteq \textstyle\bigcup_{i = 1}^{n_{m}}O(x_{i},3r_{i}).
\end{equation}
Take any open ball $O(x,r) \subseteq W_{m}$. We have $x \in S_{m,i} \subseteq O(x_{i},r_{i})$ for some $i \in \{1,\dots,n_{m}\}$. If it were the case that $r_{i} < r$, we would have $\rho_{i} < 2r$, along with the inclusions $S_{m,i} \subseteq O(x_{i},r_{i}) \subseteq O(x,2r) \subseteq (W_{m})_{*}$, contradicting the definition of $\rho_{i}$. Thus $r \leq r_{i}$, and therefore $O(x,2r) \subseteq O(x_{i},3r_{i})$. This shows \eqref{eqn.sequ0}.

Secondly, for each $i \in \{1,\dots,n_{m}\}$, since $O(x_{i},r_{i})$ is contained in $(W_{m})_{*}$, we have
\begin{equation}\label{eqn.sequ1}
\textstyle\bigcup_{i = 1}^{n_{m}}O(x_{i},7r_{i}) \subseteq (W_{m})_{*}^{4}.
\end{equation}

Take $i \in \{1,\dots,n_{m}\}$. Since $2r_{i}> \rho_{i}$, the set $O(x_{i},2r_i) \setminus (W_{m})_{*}$ is not empty. Consequently, the set $B(x_{i},2r_{i}) \setminus (W_{m})_{*}$ is not empty. Now since $W_{m}$ is simple, ${\rm cl}\, W_{m} \subseteq (W_{m})_{*}$. Thus the set $B(x_{i},2r_{i}) \setminus {\rm cl}\,W_{m}$ is not empty. Take a point $y_{i} \in B(x_{i},2r_{i}) \setminus {\rm cl}\,W_{m}$. Let $n_{m+1}$ denote the number of distinct points among $y_{1},\dots,y_{n_{m}}$. Enumerate these points as $z_{1},\dots,z_{n_{m+1}}$. For each $k \in \{1,\dots,n_{m+1}\}$ let $I_{k} = \{i \in \{1,\dots,n_{m}\}: y_{i} = z_{k}\}$. Also let $i_{k}$ denote any element of $I_{k}$ maximizing $r_{i}$ over $i \in I_{k}$.

Take a $k \in \{1,\dots,n_{m+1}\}$. Since for each $i \in I_{k}$ the ball $B(x_{i},2r_{i})$ has a point in common (namely $z_{i}$) with the ball $B(x_{i_{k}}, 2r_{i_{k}})$, and since $r_{i} \leq r_{i_{k}}$, we have \begin{equation}\label{eqn.sequ2}
O(x_{i},3r_{i}) \subseteq O(x_{i_k},7r_{i_k})\qquad\text{ for each }i \in I_{k}.
\end{equation}

Define
$$W_{m+1} = \textstyle\bigcup_{k = 1}^{n_{m+1}}O(x_{i_{k}},7r_{i_{k}}).$$
That $W_{m+1}$ satisfies the first inclusion of condition \eqref{P2} follows from \eqref{eqn.sequ0} and \eqref{eqn.sequ2}. That it satisfies the second inclusion of \eqref{P2} follows from \eqref{eqn.sequ1}.

We proceed to define the partition $\mathcal{S}_{m+1}$ of $W_{m+1}$.

Since the points $z_{1},\dots,z_{n_{m+1}}$ are all distinct, and each $z_{k}$ is a point of $O(x_{i_{k}},7r_{i_{k}}) \setminus {\rm cl}\,W_{m}$, we can choose a $\delta > 0$ small enough that
\begin{itemize}
\item[{\rm(a)}] $O(z_{k},\delta) \subseteq O(x_{i_{k}},7r_{i_{k}})$,
\item[{\rm(b)}] $O(z_{k},\delta)$ is disjoint from ${\rm cl}\,W_{m}$ and
\item[{\rm(c)}] the balls $O(z_{1},\delta), \dots, O(z_{n_{m+1}},\delta)$ are pairwise disjoint.
\end{itemize}

We define $\mathcal{S}_{m+1}$ to consist of sets $S_{m+1,1},\dots,S_{m+1,n_{m+1}}$. For each $k \in \{1,\dots,n_{m+1}\}$ the set $S_{m+1,k}$ is built as follows: we first include the sets $S_{m,i}$ for all $i \in I_{k}$, and the ball $O(z_{k},\delta)$. Next we add a part of the ball $O(x_{i_{k}},7r_{i_{k}})$ chosen so that $S_{m+1,k}$ is disjoint from the other elements of $\mathcal{S}_{m+1}$. More rigorously, we let for $k \in \{1,\dots,n_{m+1}\}$,
$$S_{m+1,k}' = \textstyle\bigcup_{i \in I_{k}}S_{m,i} \textstyle\bigcup O(z_{k},\delta).$$
Next we define
$$D_{k} = O(x_{i_{k}},7r_{i_{k}}) \setminus \textstyle\bigcup_{j < k}O(x_{i_{j}},7r_{i_{j}}).$$
Finally we let $S_{m+1}' = S_{m+1,1}' \cup \cdots \cup S_{m+1,n_{m+1}}'$ and
$$S_{m+1,k} = S_{m+1,k}' \cup (D_{k} \setminus S_{m+1}').$$
Then $W_{m+1}  =\bigcup\mathcal{S}_{m+1}$. It remains to verify that $\mathcal{S}_{m+1}$ satisfies conditions \eqref{P1}, \eqref{P3}, and \eqref{P4}. Condition \eqref{P1} is satisfied since $S_{m+1,k} \subseteq O(x_{i_{k}},7r_{i_{k}}) \subseteq W_{m+1}$. Condition \eqref{P3} holds since $S_{m,i} \subseteq S_{m+1,k}$ for each $i \in I_{k}$ and $k \in \{1,\dots,n_{m+1}\}$. To verify condition \eqref{P4}, notice that $S_{m+1,k} \setminus W_{m}$ contains the open ball $O(z_{k},\delta)$.
This concludes part I.\medskip

\noindent\textsc{Part II: Constructing a measure.} Let $U$ and $V$ be simple open sets, and for each $m \in \mathbb{N}$ let $W_{m}$ and $\mathcal{S}_{m}$ be as in Part I of the proof (corresponding to $U$).

If $V\subseteq W_1$, it follows from \eqref{P2} that $d_\rightarrow (U,V)\le 4\le 4[6 m_\rightarrow(U,V)+2]$. So we may assume that $V \setminus W_1$ is non-empty. Let $M$ denote the least natural number such that $V \subseteq W_{M+2}$. As follows from \eqref{P2}, $W_{m} \subseteq U_{*}^{4m}$ for each $m \in \mathbb{N}$, hence $d_{\rightarrow}(U,V) \leq 4(M+2)$. We show that for all small enough $\epsilon>0$, there exists an $\epsilon^{-6}$-doubling measure $\mu_{\epsilon}$ on $X$ such that $\mu_{\epsilon}(U) \lesssim \epsilon^{M}\mu_{\epsilon}(V)$. This implies $M \leq 6m(U,V)$, leading to the desired estimate.

To that end, let us begin by choosing any doubling measure $\lambda$ on $X$. Let $\epsilon \in (0,1)$ and $m \in \mathbb{N}$. For each $S \in \mathcal{S}_{m+1}$ let $J_{m}(S)$ denote the union of the sets $S' \in \mathcal{S}_{m}$ contained in $S$. Define a Borel function $f_{m+1,\epsilon} : X \rightarrow \mathbb{R}$ as follows:
$$f_{m+1,\epsilon}(x)  = \begin{cases}1&\text{if }x \in X \setminus W_{m+1}\\\epsilon&\text{if }x \in W_{m}\\\displaystyle\frac{\lambda(S) - \epsilon \lambda(J_{m}(S))}{\lambda(S \setminus J_{m}(S))}&\text{if }x \in S \setminus J_{m}(S)\text{ for some }S \in \mathcal{S}_{m+1}.\end{cases}$$
Notice that $S \setminus J_{m}(S) = S \setminus W_{m}$ has a positive $\lambda$-measure by condition \eqref{P4}.

Let $\mu_{0,\epsilon}=\lambda$ and for $m\in\{1,\ldots, M\}$ define a measure $\mu_{m,\epsilon}$ recursively by the formula
$$\mu_{m,\epsilon}(E) = \int_{E}f_{m,\epsilon}d\mu_{m-1,\epsilon}\text{ for all Borel sets }E\,.$$
An easy induction shows that
\begin{align}
\mu_{m,\epsilon}(E) &= \lambda(E)\qquad\text{ for each }E \subseteq X \setminus W_{m}\label{eqn.preserve0}\\
\mu_{m,\epsilon}(S) &= \lambda(S)\qquad\text{ for each }S \in \mathcal{S}_{m}.\label{eqn.preserve1}
\end{align}

Let
$$K = \max\left\{\frac{\lambda(S)}{\lambda(S \setminus J_{m}(S))}:m \in \{0,\dots,M-1\},\, S \in \mathcal{S}_{m+1}\right\}.$$
Notice that this constant $K$ is independent of $\epsilon$ and that the functions $f_{1,\epsilon},\dots,f_{M,\epsilon}$ are bounded above by $K$.

Let $C$ be the doubling constant of $\lambda$. Take an $\epsilon < C^{-4}K^{-5}$. We next argue that $\mu_{M,\epsilon}$ is doubling with the doubling constant $\epsilon^{-6}$.

Take $x \in X$ and $r > 0$. Define a number $m \in \{0,\dots,M\}$ as follows: If $O(x,2r) \subseteq W_{M}$, let $m$ be the smallest number such that $O(x,2r) \subseteq W_{m}$; and if $O(x,2r)$ is not contained in $W_{M}$, let $m = M$. We estimate $\mu_{M,\epsilon}(O(x,r))$ from below and $\mu_{M,\epsilon}(O(x,2r))$ from above.

First note that
\begin{align}
\mu_{M,\epsilon}(O(x,r)) &= \epsilon^{M-m}\cdot\mu_{m,\epsilon}(O(x,r))
\label{eqn.below0}\\
\mu_{M,\epsilon}(O(x,2r)) &= \epsilon^{M-m}\cdot\mu_{m,\epsilon}(O(x,2r)).
\label{eqn.above0}
\end{align}
Indeed, if $m = M$, the equations are trivially true. And if $m < M$, they follow since the functions $f_{M,\epsilon},\dots,f_{m+1,\epsilon}$ are all identically $\epsilon$ on $W_{m}$.

First suppose that $m < 5$. Since each of the functions $f_{m,\epsilon},\dots,f_{1,\epsilon}$ are bounded below by $\epsilon$ and above by $K < 1/\epsilon$, we obtain
\begin{align*}
\mu_{m,\epsilon}(O(x,r)) &\geq \epsilon^{5}\cdot\lambda(O(x,r))\\
\mu_{m,\epsilon}(O(x,2r)) &\leq K^5 \cdot \lambda(O(x,2r)).
\end{align*}
This gives
$$\frac{\mu_{M,\epsilon}(O(x,r))}{\mu_{M,\epsilon}(O(x,2r))} =
\frac{\mu_{m,\epsilon}(O(x,r))}{\mu_{m,\epsilon}(O(x,2r))} \geq
\epsilon^{5}K^{-5} \frac{\lambda(O(x,r))}{\lambda(O(x,2r))} \geq
\epsilon^{5}K^{-5}C^{-1} \geq \epsilon^{6}.$$

Now we turn to the case $m \geq 5$. Since each of the functions $f_{m,\epsilon},\dots,f_{m-4,\epsilon}$ are bounded below by $\epsilon$ and above by $K < 1/\epsilon$, we obtain
\begin{align}
\mu_{m,\epsilon}(O(x,r)) &\geq \epsilon^{5}\cdot\mu_{m-5,\epsilon}(O(x,r))
\label{eqn.below1}\\
\mu_{m,\epsilon}(O(x,2r))&\leq K^5\cdot\mu_{m-5,\epsilon}(O(x,2r)).
\label{eqn.above1}
\end{align}

We next derive a lower bound on $\mu_{m-5,\epsilon}(O(x,r))$.
First we argue that if a ball $O(x,r/3)$ meets a set $S \in \mathcal{S}_{m-5}$, then $S \subseteq O(x,r)$. To see this,  suppose that $O(x,r/3)$ meets some $S \in \mathcal{S}_{m-5}$. By condition \eqref{P1}, there is an $O(x',r')$ such that $S \subseteq O(x',r') \subseteq W_{m-5}$. It suffices to show that $r'\le r/3$ since then $S \subseteq O(x',r') \subseteq O(x,r)$.
Suppose on the contrary that $r/3 < r'$. Then $O(x,r) \subseteq O(x',5r') \subseteq (W_{m-5})_{*}^{3}$, and by condition \eqref{P2},
 $(W_{m-5})_{*}^{3} \subseteq W_{m-2}$. Thus
 $O(x,r) \subseteq W_{m-2}$, so
 $O(x,2r) \subseteq (W_{m-2})_*\subset W_{m-1}$, contradicting the choice of $m$. Hence we have shown that $S\subseteq O(x,r)$. Using this inclusion for all $S\in\mathcal{S}_{m-5}$ intersecting $O(x,r/3)$, we arrive at the estimate
\begin{align*}
\mu_{m-5,\epsilon}(O(x,r) \cap W_{m-5}) &\geq \sum_{S \in \mathcal{S}_{m-5}:\,S \cap O(x,r/3) \neq \oslash} \mu_{m-5,\epsilon}(S)\\& = \sum_{S \in \mathcal{S}_{m-5}:\,S \cap O(x,r/3) \neq \oslash} \lambda(S)\\& \geq \lambda(O(x,\tfrac{1}{3}r)\cap W_{m-5}),
\end{align*}
where the equality in the second line follows by \eqref{eqn.preserve1}. On the other hand, in view of \eqref{eqn.preserve0} we have
$$\mu_{m-5,\epsilon}(O(x,r) \setminus W_{m-5}) = \lambda(O(x,r) \setminus W_{m-5}).$$
Combining these estimates we obtain
\begin{equation}\label{eqn.below2}
\mu_{m-5,\epsilon}(O(x,r)) \geq \lambda(O(x,r/3)).
\end{equation}

We next turn to an upper bound on $\mu_{m-5,\epsilon}(O(x,2r))$.
First we argue that if a ball $O(x,2r)$ meets a set $S \in \mathcal{S}_{m-5}$, then $S \subseteq O(x,3r)$. To see this,  suppose that $O(x,2r)$ meets some $S \in \mathcal{S}_{m-5}$. By condition \eqref{P1}, there is an $O(x',r')$ such that $S \subseteq O(x',r') \subseteq W_{m-5}$. Suppose first that $r/2 < r'$. Then $O(x,r) \subseteq O(x',7r') \subseteq (W_{m-5})_{*}^{3}  \subseteq W_{m-2}$. Thus $O(x,2r) \subseteq W_{m-1}$, contradicting the choice of $m$. Hence $r' \leq r/2$. But then
\[S\subseteq O(x',r') \subseteq O(x,3r)\,,\]
as desired. Using this for all $S\in\mathcal{S}_{m-5}$ we obtain the estimate
\begin{align*}
\mu_{m-5,\epsilon}(O(x,2r) \cap W_{m-5}) &\leq \sum_{S \in \mathcal{S}_{m-5}:\,S \cap O(x,2r) \neq \oslash} \mu_{m-5,\epsilon}(S)\\& = \sum_{S \in \mathcal{S}_{m-5}:\,S \cap O(x,2r) \neq \oslash} \lambda(S)\\& \leq \lambda(O(x,3r)\cap W_{m-5}).
\end{align*}
In view of \eqref{eqn.preserve0} we also have
$$\mu_{m-5,\epsilon}(O(x,2r) \setminus W_{m-5}) = \lambda(O(x,2r) \setminus W_{m-5}).$$
Combining these estimates we obtain
\begin{equation}\label{eqn.above2}
\mu_{m-5,\epsilon}(O(x,2r)) \leq \lambda(O(x,3r)).
\end{equation}

Thus, putting together the three pairs of estimates \eqref{eqn.below0}-\eqref{eqn.above0}, \eqref{eqn.below1}-\eqref{eqn.above1}, and \eqref{eqn.below2}-\eqref{eqn.above2}, we obtain
\[
\frac{\mu_{M,\epsilon}(O(x,r))}{\mu_{M,\epsilon}(O(x,2r))} \geq \epsilon^{5}K^{-5}\, \frac{\mu_{m-5,\epsilon}(O(x,r))}{\mu_{m-5,\epsilon}(O(x,2r))} \geq \epsilon^{5}K^{-5}\,\frac{\lambda(O(x,\tfrac{1}{3}r))}{\lambda(O(x,3r))} \geq \epsilon^{5}K^{-5}C^{-4}\,\geq \epsilon^{6}.
\]
We have thus shown that $\mu_{M,\epsilon}$ is $\epsilon^{-6}$-doubling.

We argue that the ratio
\begin{equation}\label{eqn.ratio}
\frac{\mu_{M,\epsilon}(U)}{\mu_{M,\epsilon}(V)\epsilon^{M}}
\end{equation}
is bounded above as $\epsilon\downarrow 0$. Since each of the functions $f_{1,\epsilon},\dots,f_{M,\epsilon}$ is identically $\epsilon$ on $W_{0} = U$, we have $\mu_{M,\epsilon}(U) = \epsilon^{M} \cdot \lambda(U)$. On the other hand, $\mu_{M,\epsilon}(V)$ is bounded below by a positive constant that is independent of $\epsilon$. Indeed, $V$ is not contained in $W_{M+1}$. Since $V \setminus W_{M+1} \subseteq V \setminus {\rm cl}\, W_{M} \subseteq V \setminus W_{M}$, the interior of the set $V \setminus W_{M}$ is not empty. Hence $\lambda(V \setminus W_{M}) > 0$. Therefore $\mu_{M,\epsilon}(V) \geq \mu_{M,\epsilon}(V \setminus W_{M}) = \lambda(V \setminus W_{M})$. The result follows.

Finally, we show that $M \leq 6m_{\rightarrow}(U,V)$. For suppose to the contrary. Take $t$ such that $m_{\rightarrow}(U,V) < t < \tfrac{1}{6}M$. Since $\mu_{M,\epsilon} \in \mathcal{D}_{\epsilon^{-6}}(X)$, the ratio \eqref{eqn.ratio} is bounded below by $\epsilon^{6t - M}$. However, the latter bound goes to infinity as $\epsilon \downarrow 0$. This contradicts the conclusion that \eqref{eqn.ratio} is bounded above. $\Box$\bigskip

\section{Lipschitz functions with respect to the doubling metric}\label{sec:maps}
In this section, we consider a continuous surjective function $f \colon X \to Y$ between two metric spaces $X$ and $Y$ and study the induced map $\Phi$, $U\mapsto f^{-1}(U)$ from the collection of non--empty open subsets of $Y$ to that of $X$. We also consider the pushforward measures $\mu\circ f^{-1}$ on $Y$ for measures $\mu \in \mathcal{D}(X)$. We are interested in functions $f$ satisfying some of the following conditions:

\begin{enumerate}
\renewcommand{\labelenumi}{(F\arabic{enumi})}
\renewcommand{\theenumi}{F\arabic{enumi}}
\item\label{F1} There exists a $K<\infty$ such that $m(f^{-1}(U),f^{-1}(V)) \leq K\,m(U,V)$ for all sets $U$ and $V$ in $\mathcal{U}_{Y}$.
\item\label{F2} There exists a $K<\infty$ such that for each $1 \leq C < \infty$ and each $\mu \in \mathcal{D}_{C}(X)$ it holds that $\mu \circ f^{-1} \in \mathcal{D}_{C^{K}}(Y)$.
\item\label{F3} There exists a $K<\infty$ such that $d(f^{-1}(U),f^{-1}(V)) \leq K\,d(U,V)$ for all sets $U$ and $V$ in $\mathcal{U}_{Y}$.
\end{enumerate}

Note that the condition \eqref{F3} simply states that the map $\Phi$ is Lipschitz with respect to the doubling metric on the collection $\mathcal{U}_{Y}$. Likewise, \eqref{F1} says that $\Phi$ is Lipschitz with respect to the pseudometric $m$ on $\mathcal{U}_{Y}$. The condition \eqref{F2} says that $f$ preserves doubling measures, quantitatively.

\begin{prop}\label{prop.F_cond}
Let $f \colon X \to Y$ be a continuous surjective function. Suppose that the space $X$ carries at least one doubling measure. Then the conditions \eqref{F1} and \eqref{F2} are equivalent, and both are implied by \eqref{F3}.
\end{prop}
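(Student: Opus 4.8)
The plan is to establish the equivalence \eqref{F1} $\Leftrightarrow$ \eqref{F2} together with the implication \eqref{F3} $\Rightarrow$ \eqref{F2}; the remaining implication \eqref{F3} $\Rightarrow$ \eqref{F1} is then immediate. Two elementary observations underlie every step. First, since $f$ is continuous and surjective, $f^{-1}(O)$ is a non-empty open subset of $X$ whenever $O\subseteq Y$ is non-empty and open, so that $\mu(f^{-1}(O))=(\mu\circ f^{-1})(O)>0$ for every doubling measure $\mu$ on $X$. Second, for all $y\in Y$ and $r>0$ one has $d\bigl(O(y,r),O(y,2r)\bigr)\le 1$ and $m\bigl(O(y,r),O(y,2r)\bigr)\le 1$: the inclusion $O(y,r)\subseteq O(y,2r)$ makes the directed distances from $O(y,2r)$ to $O(y,r)$ vanish; the inclusion $O(y,2r)\subseteq\bigl(O(y,r)\bigr)_{*}$, obtained by taking the ball $O(y,r)$ itself in the definition of the predecessor, gives $d_{\rightarrow}(O(y,r),O(y,2r))\le 1$; and the open-ball doubling inequality $\mu'(O(y,2r))\le C'\mu'(O(y,r))$, valid for every $C'$-doubling measure $\mu'$, is precisely the assertion $m_{\rightarrow}(O(y,r),O(y,2r))\le 1$.

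For \eqref{F2} $\Rightarrow$ \eqref{F1}, I would fix $U,V\in\mathcal U_{Y}$, a constant $C\ge 1$, a measure $\mu\in\mathcal D_{C}(X)$, and an exponent $t>m_{\rightarrow}(U,V)$. Since $\mu\circ f^{-1}\in\mathcal D_{C^{K}}(Y)$ by \eqref{F2}, the defining property of $m_{\rightarrow}(U,V)$ gives
\[
\mu(f^{-1}(U))=(\mu\circ f^{-1})(U)\ \ge\ (C^{K})^{-t}\,(\mu\circ f^{-1})(V)\ =\ C^{-Kt}\,\mu(f^{-1}(V)) .
\]
As $C$, $\mu$ and $t>m_{\rightarrow}(U,V)$ were arbitrary, this yields $m_{\rightarrow}(f^{-1}(U),f^{-1}(V))\le K\,m_{\rightarrow}(U,V)$; applying the same reasoning to the pair $(V,U)$ and taking the maximum gives $m(f^{-1}(U),f^{-1}(V))\le K\,m(U,V)$.

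For \eqref{F1} $\Rightarrow$ \eqref{F2}, I would fix $C\ge 1$ and $\mu\in\mathcal D_{C}(X)$, set $\nu=\mu\circ f^{-1}$, and check that $\nu\in\mathcal D_{C^{K}}(Y)$. Positivity of $\nu$ on balls is the first observation above. For the doubling inequality, fix $y\in Y$ and $r>0$ and apply \eqref{F1} to the pair $O(y,r),O(y,2r)\in\mathcal U_{Y}$: since $m(O(y,r),O(y,2r))\le 1$ we obtain $m_{\rightarrow}\bigl(f^{-1}(O(y,r)),f^{-1}(O(y,2r))\bigr)\le K$, so for every $t>K$ the definition of $m_{\rightarrow}$, used with this particular $C$ and $\mu$, gives $\nu(O(y,r))\ge C^{-t}\nu(O(y,2r))$, i.e. $\nu(O(y,2r))\le C^{t}\nu(O(y,r))$; letting $t\downarrow K$ completes this direction. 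The implication \eqref{F3} $\Rightarrow$ \eqref{F2} runs the same way, starting instead from $d(O(y,r),O(y,2r))\le 1$: then \eqref{F3} gives $d_{\rightarrow}\bigl(f^{-1}(O(y,r)),f^{-1}(O(y,2r))\bigr)\le K<\infty$, so Theorem \ref{7Rcor} applied to $\mu\in\mathcal D_{C}(X)$ yields $\mu(f^{-1}(O(y,r)))\ge C^{-3K}\mu(f^{-1}(O(y,2r)))$, that is $\nu\in\mathcal D_{C^{3K}}(Y)$.

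Granting the two preliminary observations, each implication is a short unwinding of the definitions, so I do not anticipate a serious obstacle. The two points that need some care are the bookkeeping with the infimum defining $m_{\rightarrow}$ — the inequality $\mu'(A)\ge (C')^{-t}\mu'(B)$ holds only for exponents $t$ strictly above the relevant directed distance, and one must pass to the limit in the end — and the verification that the pushforward $\mu\circ f^{-1}$ is positive and finite on the balls of $Y$, the ratio estimate above being the substantive content of the argument.
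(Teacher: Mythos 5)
Your proposal is correct and follows essentially the same route as the paper. The paper disposes of the equivalence $\eqref{F1}\Leftrightarrow\eqref{F2}$ in one sentence (``a direct corollary of the definition of $m$''), and for $\eqref{F3}\Rightarrow\eqref{F2}$ it applies Theorem~\ref{7Rcor} to the pair $f^{-1}(O(y,r))$, $f^{-1}(O(y,2r))$ exactly as you do, getting the same constant $C^{3K}$. What you have done is spell out the elided equivalence carefully: the bound $m(O(y,r),O(y,2r))\le 1$ from the open-ball form of the doubling condition, the passage to the pushforward in $\eqref{F2}\Rightarrow\eqref{F1}$, and the $t\downarrow K$ limit in $\eqref{F1}\Rightarrow\eqref{F2}$. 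This is a faithful and more explicit rendering of the paper's proof, with constants matching Remark~\ref{thm.k3}.

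One small point you flag but do not settle, and which the paper also glosses over: in $\eqref{F1}\Rightarrow\eqref{F2}$ and $\eqref{F3}\Rightarrow\eqref{F2}$ one needs $\nu=\mu\circ f^{-1}$ to be \emph{finite} on balls of $Y$, not merely positive, before it can belong to $\mathcal D_{C^K}(Y)$. Positivity follows from surjectivity of $f$ as you note, but finiteness does not follow from the hypotheses of the proposition alone (preimages of balls in $Y$ need not be bounded in $X$, and doubling measures can assign infinite mass to unbounded sets). The ratio inequality $\nu(O(y,2r))\le C^K\nu(O(y,r))$ that you derive does not by itself rule out both sides being $+\infty$. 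This is a gap inherited from the paper rather than introduced by you; the stronger hypotheses of Theorem~\ref{thm.F.equiv} (each closed ball of $X$ compact, $f$-preimages of bounded sets bounded) are what close it in the setting where the proposition is actually applied.
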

\begin{proof}
The equivalence of \eqref{F1} and \eqref{F2} is a direct corollary of the definition of $m$.

Suppose that \eqref{F3} holds. Take a measure $\mu \in \mathcal{D}_C(X)$ and consider an open ball $O(y,r)$ in $Y$. Then $d_\rightarrow(f^{-1}(O(y,r)),f^{-1}(O(y,2r))) \le K$, which implies by Theorem \ref{7Rcor} that $\mu(f^{-1}(O(y,2r))) \le C^{3K}\mu(f^{-1}(O(y,r))$. Thus $\mu \circ f^{-1} \in \mathcal{D}_{C^{3K}}(Y)$. This shows that \eqref{F3}$\Longrightarrow$\eqref{F2}.
\end{proof}

\begin{rem}\label{thm.k3}\rm
Let us note that the implication \eqref{F3} $\Longrightarrow$ \eqref{F2} multiplies $K$ by 3 and that the  implications \eqref{F1} $\Longrightarrow$ \eqref{F2} and \eqref{F2} $\Longrightarrow$ \eqref{F1} both preserve $K$. If $\delta$ is an ultrametric, or if $X = \mathbb{R}$, then also the implication \eqref{F3} $\Longrightarrow$ \eqref{F2} preserves $K$.
\end{rem}

The following lemma simplifies the verification of condition \eqref{F3}: it suffices to check that the condition holds for pairs of concentric balls.
\begin{lemma}\label{thm.F3balls}
Let $f \colon X \to Y$ be a continuous surjective function. Then $f$ satisfies \eqref{F3} if and only if there exists a $K<\infty$ such that for each $y \in Y$ and each $r > 0$ the directed doubling distance between $f^{-1}(O(y,r))$ and $f^{-1}(O(y,2r))$ is at most $K$.
\end{lemma}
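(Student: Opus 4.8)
The plan is to prove the two implications separately. The forward implication is essentially a one-line application of the definition of \eqref{F3}, so the content is entirely in the reverse implication, whose point is to bootstrap the Lipschitz property for arbitrary pairs of bounded open sets from the assumed uniform bound on pairs of concentric balls.

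For the forward direction, assume \eqref{F3} with constant $K$ and fix $y\in Y$ and $r>0$. I would observe that $O(y,2r)\subseteq O(y,r)_{*}$ (take the ball $O(y,r)\subseteq O(y,r)$ in the definition of the predecessor) and that $O(y,r)\subseteq O(y,2r)$, so $d_{\rightarrow}(O(y,r),O(y,2r))\le 1$ and $d_{\rightarrow}(O(y,2r),O(y,r))=0$, whence $d(O(y,r),O(y,2r))\le 1$. Plugging this pair into \eqref{F3} gives $d_{\rightarrow}(f^{-1}(O(y,r)),f^{-1}(O(y,2r)))\le d(f^{-1}(O(y,r)),f^{-1}(O(y,2r)))\le K$, which is the stated condition.

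For the reverse direction, assume the concentric-ball condition holds with constant $K$. The main step is to show that $d_{\rightarrow}(f^{-1}(U),f^{-1}(U_{*}))\le K$ for \emph{every} open $U\subseteq Y$. I would prove this by writing $U$ as the union of all balls $O(y,r)$ it contains, so that $U_{*}$ is the union of the corresponding doubled balls $O(y,2r)$; since taking preimages commutes with arbitrary unions and $f$ is continuous, $f^{-1}(U)$ and $f^{-1}(U_{*})$ are the unions, over the index set $I=\{(y,r):O(y,r)\subseteq U\}$, of the open sets $f^{-1}(O(y,r))$ and $f^{-1}(O(y,2r))$ respectively. Part \eqref{u2} of Lemma \ref{thm.union} then bounds $d_{\rightarrow}(f^{-1}(U),f^{-1}(U_{*}))$ by $\sup_{(y,r)\in I}d_{\rightarrow}(f^{-1}(O(y,r)),f^{-1}(O(y,2r)))$, which is $\le K$ by hypothesis.

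With the main step in hand, the triangle inequality (part \eqref{u3} of Lemma \ref{thm.union}) gives, by an immediate induction on $n$, that $d_{\rightarrow}(f^{-1}(U),f^{-1}(U_{*}^{n}))\le Kn$ for every open $U\subseteq Y$ and every $n\in\mathbb{N}$. Finally, for $U,V\in\mathcal{U}_{Y}$ I would set $n=d_{\rightarrow}(U,V)$ (the case $n=\infty$ being vacuous): then $V\subseteq U_{*}^{n}$, hence $f^{-1}(V)\subseteq f^{-1}(U_{*}^{n})$, and therefore $d_{\rightarrow}(f^{-1}(U),f^{-1}(V))\le d_{\rightarrow}(f^{-1}(U),f^{-1}(U_{*}^{n}))\le Kn=K\,d_{\rightarrow}(U,V)$. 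Taking the maximum of this estimate and the one obtained by interchanging $U$ and $V$ yields $d(f^{-1}(U),f^{-1}(V))\le K\,d(U,V)$, i.e.\ \eqref{F3} with the same constant $K$. I do not expect a genuine obstacle here; the one point that needs care is the reduction in the main step, which works precisely because preimages commute with unions, so that control of $f$ on balls automatically propagates to control of $f$ on the predecessor of an arbitrary open set.
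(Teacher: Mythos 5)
Your proof is correct and takes essentially the same route as the paper: for the reverse direction you cover $U$ by the balls it contains, pull back by $f$, apply Lemma \ref{thm.union}\eqref{u2} to get $d_{\rightarrow}(f^{-1}(U),f^{-1}(U_{*}))\le K$, and then iterate using the triangle inequality, exactly as in the paper. The forward direction, which the paper leaves unstated as trivial, you have spelled out correctly.
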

\begin{proof}
Let $K$ be as in the lemma. We argue that for $U \in \mathcal{U}_{Y}$ we have
\begin{equation}\label{eq:step_1}
d(f^{-1}(U),f^{-1}(U_{*})) \leq K.
\end{equation}
To see this, let $I$ be the set of pairs $(y,r)$ where $y \in Y$ and $r > 0$ such that $O(y,r) \subseteq U$. For each $\alpha = (y,r)$ in $I$, let $U_{\alpha} = f^{-1}(O(y,r))$ and $V_{\alpha} = f^{-1}(O(y,2r))$. Now we apply Lemma \ref{thm.union}: since $d_{\rightarrow}(U_{\alpha},V_{\alpha}) \leq K$ for each $\alpha \in I$, we have $d_{\rightarrow}(\bigcup_{\alpha \in I} U_{\alpha},\bigcup_{\alpha \in I}V_{\alpha}) \leq K$. But $\bigcup_{\alpha \in I}U_{\alpha} = f^{-1}(U)$, and $\bigcup_{\alpha \in I}V_{\alpha} = f^{-1}(U_{*})$.
	
As in the proof of Lemma \ref{thm.union}\eqref{u4}, applying the estimate \eqref{eq:step_1} inductively, we find that if $V\subseteq U_{*}^n$, then $d_{\rightarrow}(f^{-1}(U), f^{-1}(V))\le Kn$. This completes the verification of \eqref{F3}.	
\end{proof}

We proceed with some examples, starting with a well-known class of quasisymmetric homeomorphisms.

\begin{exl}\label{exl.quasisymmertic}\rm
A quasisymmetric homeomorphism $f \colon X \to Y$ between metric spaces $X$ and $Y$ satisfies all three conditions \eqref{F1}, \eqref{F2}, and \eqref{F3}. The fact that quasisymmetric homeomorphisms satisfy \eqref{F3} follows directly from the estimates in \cite[Proposition 1.2]{ORS2012}. Lemma 16.4 in \cite{DavidSemmes1997} essentially says that quasisymmetric homeomorphisms satisfy \eqref{F2}.

Now recall that the inverse of a quasisymmetric homeomorphism is also quasisymmetric. Hence, if $f$ is a quasisymmetric homeomorphism, then the induced map $\Phi$ is bi-Lipschitz on $\mathcal{U}_{Y}$ with respect to the doubling metric. We return to this connection in subsection 5.1 below.
\end{exl}

The examples below point out two features of the conditions (F1)--(F3). Firstly, these conditions may well apply to functions that are not necessarily injective.

\begin{exl}\rm
Let $X$ and $Y$ be non-empty metric spaces, and $X \times Y$ be equipped with the supremum metric. Then the projection $X \times Y\to X$, $(x,y)\mapsto x$, satisfies the condition \eqref{F3} with the constant $K = 1$, and hence (in view of Remark \ref{thm.k3}), it satisfies \eqref{F2} and \eqref{F1} with $K = 3$.
\end{exl}

\begin{exl}\label{exl.shift}\rm
Let $m\in\N$ and $\Lambda=\{0,\ldots,m\}$. Consider the space $\Sigma=\Lambda^\N$ with its usual ultrametric given as follows: $\delta(\iii,\jjj)=2^{-n}$, if $\iii=i_1i_2\ldots,\jjj=j_1j_2\ldots\in\Sigma$, and $n=n(\iii,\jjj)$ is the unique element of $\mathbb{N}$ such that $i_1\ldots i_n=j_1\ldots j_n$ and $i_{n+1}\neq j_{n+1}$. Now, if we consider the backward shift $\sigma\colon\Sigma\to\Sigma$, $\sigma(i_1i_2i_3\ldots)=i_2i_3\ldots$, then the condition \eqref{F1} holds for $\sigma$ with the constant $K=1$. Hence (in view of Remark \ref{thm.k3}), \eqref{F2} and \eqref{F1} also hold with $K = 1$.
\end{exl}

And secondly, even for an injective function $f$, it is perfectly possible that $f$ satisfies conditions (F1)--(F3) while its inverse violates them.

\begin{exl}\rm Let $X = \{x \in (0,1) \times [0,1): x_{2} = 0\text{ or }x_{1} = x_{2}\}$ and $Y = (0,1) \times \{0, 1\}$, equipped, for simplicity, with the supremum metric inherited from $\mathbb{R}^{2}$. Define a homeomorphism $f \colon X \to Y$ by letting $f(x) = (x_{1},0)$ if $x_{2} = 0$ and $f(x) = (x_{1},1)$ if $x_{2} > 0$. It is not difficult to see that $f$ satisfies \eqref{F3}. To see that $f^{-1}$ does not satisfy \eqref{F3}, consider the point $x = (2r,0)$ and the balls $U_{r} = O(x,r)$ and $V_{r} = O(x,2r)$ in $X$. Then the doubling distance between $f(U_{r})$ and $f(V_{r})$ in $Y$ goes to infinity as $r$ becomes small.
\end{exl}

\begin{exl}\rm Consider the setup of Example \ref{exl.shift}. A \textit{permutation homeomorphism} is a map $f = f_{r} \colon \Sigma \rightarrow \Sigma$ given by $f(i_{1},i_2,\dots) = (i_{r(1)},i_{r(2)},\dots)$ where $r \colon \N \rightarrow \N$ is a bijection. It can be checked that for such a permutation homeomorphism, the condition \eqref{F3} holds if and only if
\[\sup_{n\in\N}(n-r(n))<\infty\,.\]
If we choose the function $r$ so that $n-r(n)$ is bounded from above but not from below, then $f=f_r$ satisfies all the conditions \eqref{F1}-\eqref{F3}, but the inverse $f^{-1}$ fails to satisfy \eqref{F3} (in fact it also fails to satisfy \eqref{F1}-\eqref{F2}, see Theorem \ref{thm.F.equiv} below).	
\end{exl}

The main result of this section is the following.

\begin{thm}\label{thm.F.equiv}
Suppose that the metric spaces $X$ and $Y$ carry some doubling measures, and that each closed ball in $X$ is compact. Let $f : X \rightarrow Y$ be a continuous surjective function such that the preimage of each bounded subset of $Y$ under $f$ is a bounded subset of $X$.  Then $f$ satisfies condition \eqref{F1} if and only if it satisfies \eqref{F2}, if and only if it satisfies \eqref{F3}.
\end{thm}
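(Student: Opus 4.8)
The plan is to prove the equivalence by a cycle of implications, using Proposition \ref{prop.F_cond} as the backbone. That proposition already gives \eqref{F1}$\Longleftrightarrow$\eqref{F2} and \eqref{F3}$\Longrightarrow$\eqref{F2} without any extra hypotheses, so the only thing left to establish is that \eqref{F2} (equivalently \eqref{F1}) implies \eqref{F3}. This is where the compactness of closed balls, the properness of $f$ (preimages of bounded sets are bounded), and the existence of doubling measures on both $X$ and $Y$ will be used, together with the main Theorem \ref{thm.simple} and its Corollary \ref{cor}.

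By Lemma \ref{thm.F3balls}, to verify \eqref{F3} it suffices to produce a uniform bound $K$ on $d_{\rightarrow}(f^{-1}(O(y,r)),f^{-1}(O(y,2r)))$ over all $y\in Y$ and $r>0$. Fix such a pair of concentric balls. Assuming \eqref{F1} with constant $K_0$, we have $m(f^{-1}(O(y,r)),f^{-1}(O(y,2r)))\le K_0\, m(O(y,r),O(y,2r))$, and since $O(y,r)$ and $O(y,2r)$ are simple open sets (single balls), Theorem \ref{thm.distancemeasure} (or a direct doubling estimate) bounds $m(O(y,r),O(y,2r))$ by an absolute constant, say $3$; hence $m(f^{-1}(O(y,r)),f^{-1}(O(y,2r)))\le 3K_0$. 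Now I want to feed this into Theorem \ref{thm.simple} — but that theorem requires the two sets to be \emph{simple} open subsets of $X$, whereas $f^{-1}(O(y,r))$ and $f^{-1}(O(y,2r))$ are merely bounded (by properness of $f$) open (by continuity) subsets of $X$. The natural fix is Corollary \ref{cor}: it bounds $d$ by $m$ plus the "simplicity defects" $C_{f^{-1}(O(y,r))}$ and $C_{f^{-1}(O(y,2r))}$, i.e. the doubling distances from these preimages to the nearest simple open sets. So the crux of the argument is to show that these simplicity defects are bounded uniformly in $y$ and $r$.

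Here is where compactness enters. Since each closed ball in $X$ is compact and $f^{-1}(O(y,2r))$ is bounded, its closure $\mathrm{cl}\,f^{-1}(O(y,2r))$ is compact; being contained in the open set $f^{-1}(O(y,3r))$ (note $\mathrm{cl}\,O(y,2r)\subseteq O(y,3r)$ when these are balls — or one can use $O(y,2r+\eta)$), compactness lets me cover it by finitely many balls each contained in $f^{-1}(O(y,3r))$, producing a simple open set $U'$ with $f^{-1}(O(y,2r))\subseteq U'\subseteq f^{-1}(O(y,3r))$ — but a finite cover alone does not control $d(f^{-1}(O(y,2r)),U')$ uniformly. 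To get a \emph{uniform} bound I must instead argue that for some fixed $n$ the $n$-th predecessor $(f^{-1}(O(y,r)))^n_*$ already contains $f^{-1}(O(y,2r))$ and is sandwiched between $f^{-1}$ of balls of controlled radii; equivalently, I should bound $d_{\rightarrow}(f^{-1}(O(y,r)),f^{-1}(O(y,2r)))$ by comparing, via Theorem \ref{7Rcor}, the pushforward measures $\mu\circ f^{-1}$, which are doubling by \eqref{F2}. Concretely: pick a doubling measure $\mu$ on $X$; then $\nu=\mu\circ f^{-1}$ is $C^{K}$-doubling on $Y$, so $\nu(O(y,2r))\le C^{K}\nu(O(y,r))$ uniformly, i.e. $\mu(f^{-1}(O(y,2r)))\le C^{K}\mu(f^{-1}(O(y,r)))$ for \emph{every} doubling $\mu$ on $X$; rephrasing, $m_{\rightarrow}(f^{-1}(O(y,r)),f^{-1}(O(y,2r)))\le K$. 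Then I apply Theorem \ref{thm.simple}/Corollary \ref{cor} to the concentric-ball preimages, and the only remaining task is bounding $C_{f^{-1}(O(y,r))}$ and $C_{f^{-1}(O(y,2r))}$ uniformly.

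The main obstacle, then, is precisely this uniform control of the "simplicity defect" $C_{f^{-1}(O(y,r))}$. I expect to handle it as follows: using compactness of $\mathrm{cl}\,f^{-1}(O(y,r))$ and the fact that $f^{-1}(O(y,r))\subseteq f^{-1}(O(y,r'))$ for $r'>r$, together with properness, one shows that there is a fixed number of predecessor steps after which the iterated predecessor of any simple open set squeezed just inside $f^{-1}(O(y,r))$ swallows $f^{-1}(O(y,r))$ — the key point being that $f^{-1}$ of a ball, while not simple, sits between $f^{-1}$ of two balls whose radii differ by a bounded ratio, and each predecessor step at least doubles the "effective radius". This gives $C_{f^{-1}(O(y,r))}\le $ some absolute constant, and plugging into Corollary \ref{cor} yields a uniform bound on $d_{\rightarrow}(f^{-1}(O(y,r)),f^{-1}(O(y,2r)))$, completing the verification of \eqref{F3} via Lemma \ref{thm.F3balls}. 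I would then close the cycle \eqref{F2}$\Longrightarrow$\eqref{F3}$\Longrightarrow$\eqref{F2}, and combine with Proposition \ref{prop.F_cond} to get the full three-way equivalence.
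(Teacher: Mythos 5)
Your scaffolding is right: Proposition \ref{prop.F_cond} reduces the theorem to proving \eqref{F1}$\Rightarrow$\eqref{F3}, Lemma \ref{thm.F3balls} reduces \eqref{F3} to concentric balls, the compactness and properness hypotheses are used exactly to produce simple open sets, and Theorem \ref{thm.simple} is the engine. But the way you try to feed $f^{-1}(O(y,r))$ and $f^{-1}(O(y,2r))$ into Theorem \ref{thm.simple} has a genuine gap, and it is precisely at the step you flag as the ``crux''. Your plan, via Corollary \ref{cor}, needs a uniform bound on the simplicity defects $C_{f^{-1}(O(y,r))}$, and the sketch you give for this (``each predecessor step at least doubles the effective radius'') is not a proof: predecessors of a simple open set squeezed just inside $f^{-1}(O(y,r))$ need not swallow $f^{-1}(O(y,r))$ in a bounded number of steps, because the preimage of a ball can have arbitrarily fine boundary structure. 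Nothing in the hypotheses gives you an \emph{a priori} uniform bound on $C_{f^{-1}(O(y,r))}$, and trying to derive it from \eqref{F1} is circular, since that is essentially what you are trying to prove.

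The paper avoids the issue entirely by a \emph{sandwich} rather than an \emph{approximation}. Since $f^{-1}(B(y,r/2))$ is compact (closed balls in $X$ compact, preimages of bounded sets bounded, $f$ continuous) and sits inside the open set $f^{-1}(O(y,r))$, one can choose a simple open set $U'$ with $f^{-1}(B(y,\tfrac12 r)) \subseteq U' \subseteq f^{-1}(O(y,r))$; similarly a simple $V'$ with $f^{-1}(B(y,2r)) \subseteq V' \subseteq f^{-1}(O(y,4r))$. The doubling distances $d(U',f^{-1}(O(y,r)))$ and $d(V',f^{-1}(O(y,2r)))$ may be \emph{uncontrolled}, but that is irrelevant: because $U' \subseteq f^{-1}(O(y,r))$ and $f^{-1}(O(y,2r)) \subseteq V'$, monotonicity gives $d_{\rightarrow}(f^{-1}(O(y,r)),f^{-1}(O(y,2r))) \le d_{\rightarrow}(U',V') \le d(U',V')$, and because $U' \supseteq f^{-1}(O(y,\tfrac12 r))$ and $V' \subseteq f^{-1}(O(y,4r))$, monotonicity of $m$ gives $m(U',V') \le m(f^{-1}(O(y,\tfrac12 r)),f^{-1}(O(y,4r))) \le K\,m(O(y,\tfrac12 r),O(y,4r)) \le 3K$. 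Now Theorem \ref{thm.simple} applies to the simple sets $U',V'$ and yields $d(U',V') \le 24(3K) + 8 = 72K+8$. The essential point you are missing is that one never needs $U'$ and $V'$ to be \emph{close} to the preimages in the doubling metric, only to be nested appropriately at adjacent scales ($r/2$, $r$ and $2r$, $4r$); this is why concentric balls at several different radii appear, and why Corollary \ref{cor} (which you reach for) is the wrong tool here.
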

\begin{proof}
Suppose that $f$ satisfies \eqref{F1}. We show that it satisfies \eqref{F3}. The other implications follow from Proposition \ref{prop.F_cond}.
	
Let $K$ be as in condition \eqref{F1}. We show that the condition of Lemma \ref{thm.F3balls} holds (with the constant $72K+8$).

Take $y \in Y$ and $r > 0$. Under our assumptions on $X$ and $f$, the set $f^{-1}\left(B(y,r/2)\right)$ is a compact subset of $X$. As it is contained in the open set $f^{-1}\left(O(y,r)\right)$, there exists a simple open set $U'$ such that
$$f^{-1}\left(B(y,\tfrac{1}{2}r)\right) \subseteq U' \subseteq f^{-1}\left(O(y,r)\right)\,.$$
Likewise, there is a simple open set $V'$ such that
$$f^{-1}\left(B(y,2r)\right) \subseteq V' \subseteq f^{-1}\left(O(y,4r)\right)\,.$$
Hence, using Theorem \ref{thm.simple}, we have
\begin{align*}
d\left(f^{-1}\left(O(y,r)\right),f^{-1}\left(O(y,2r)\right)\right) &\leq d(U',V')\\&\leq 24\, m(U',V')+8\\&\leq 24\, m\left(f^{-1}\left(O(y,\tfrac{1}{2}r)\right),f^{-1}\left(O(y,4r)\right)\right)+8\\& \leq 72K+8\,,
\end{align*}
as required.
\end{proof}

\section{Further remarks}\label{sec:remarks}
We complete the paper by discussing (a) an open problem on a possible connection between quasisymmetric functions and those that are bi-Lipschitz with respect to the doubling metric, (b) a definition of porosity based on the doubling metric.

\subsection{Bi-Lipschitz functions with respect to the doubling metric}
We remarked above (Example \ref{exl.quasisymmertic}) that if $f$ is a quasisymmetric homeomorphism, then both $f$ and its inverse satisfy \eqref{F3}, or, in other words, the induced map $\Phi$ is bi-Lipschitz with respect to the doubling metric. This leads us to ask under what conditions the converse is also true.

Below we discuss two simple examples. The first example shows that in general, a homeomorphism $f$ such that both $f$ and its inverse $f^{-1}$ satisfy \eqref{F3} need not be quasisymmetric.

\begin{exl}\rm
Let the metric spaces $X$ and $Y$ have the same underlying set $\{0,1\} \times \mathbb{N}$. Let the metric $\delta_{X}$ on $X$ be given by
$$\delta_{X}((i,n),(j,m)) = \begin{cases}0 & \text{if }i = j, n = m,\\1 & \text{otherwise}.\end{cases}$$
Let the metric $\delta_{Y}$ on $Y$ be given by
$$\delta_{Y}((i,n),(j,m)) = \begin{cases}0 & \text{if }i = j, n = m,\\2^{-n-1} & \text{if }i \neq j, n = m,\\1 & \text{otherwise}.\end{cases}$$
Let $f$ be the identity map.

That the identity map is not quasisymmetric is easy to see.

Notice that the doubling distance between any two non-empty sets in $X$ is at most $1$. We show that the doubling distance between any two non-empty sets in $Y$ is at most $2$. This would imply that both the identity maps $X \to Y$ and $Y \to X$ satisfy (F3).

Consider a non-empty set $U \subseteq Y$. Let $(i,n) \in U$. We have $O_{Y}((i,n),2^{-n-1}) = \{(i,n)\}$. Now, the ball $O_{Y}((i,n),2^{-n})$ equals the set $\{(0,n),(1,n)\}$, and it therefore coindices with the ball $O_{Y}((i,n),1)$. Finally $O_{Y}((i,n),2) = Y$. This shows that the doubling distance between the set $U$ and the whole space $Y$ is at most $2$. Hence the doubling distance between any two non-empty subsets of $Y$ is at most $2$, as desired.
\end{exl}

Of course, both spaces in the preceding example are discrete, and hence not uniformly perfect. A slight modification of the first example yields a homeomorphism that carries a uniformly perfect space into a space that is not uniformly perfect. This contrasts with the behavior of quasisymmetric maps: as is well-known, quasisymmetric maps preserve uniform perfection.

\begin{exl}\rm
We modify the preceding example as follows: Let the metric spaces $X$ and $Y$ have the same underlying set, $[0,1] \times \mathbb{N}$. Let the metric $\delta_{X}$ on $X$ be given by
$$\delta_{X}((a,n),(b,m)) = \begin{cases}|a-b| & \text{if } n = m,\\1 & \text{otherwise}.\end{cases}$$
Let the metric $\delta_{Y}$ on $Y$ be given by
$$\delta_{Y}((a,n),(b,m)) = \begin{cases}2^{-n}|a-b| & \text{if }n = m,\\2 & \text{otherwise}.\end{cases}$$
Thus $X$ is uniformly perfect, while $Y$ is not.

As before, let $f$ be the identity map.

We argue that the identity map $Y \to X$ satisfies (F3). Thus consider a point $(x,n)$ and $r > 0$. We are interested in computing the $d_{Y}$-distance between $U = O_{X}((x,n),r)$ and $V = O_{X}((x,n),2r)$. Consider the following three cases:
\begin{itemize}
\item $1 < r$. In this case $U = V = X$.
\item $r \leq 1 < 2r$. In this case $U$ is a subset of $[0,1] \times \{n\}$. Hence $U = O_{Y}((x,n),2^{-n}r)$. Moreover the ball $O_{Y}((x,n),2^{-n+1}r)$ equals all of the set $[0,1] \times \{n\}$, and it coincides with the ball $O_{Y}((x,n),2)$. And $O_{Y}((x,n),4) = Y$.
\item $2r \leq 1$. In this case both $U$ and $V$ are subsets of $[0,1] \times \{n\}$. Hence $U = \linebreak O_{Y}((x,n),2^{-n}r)$ and $V = O_{Y}((x,n),2^{-n+1}r)$.
\end{itemize}
Thus in all cases $d_{Y}(U,V) \leq 2$.

A similar argument shows that the identity map $X \to Y$ satisfies (F3).
\end{exl}

\noindent The examples help us fine-tune our question.\\\medskip

\noindent\textbf{Open problem:} \textit{Suppose that $X$ and $Y$ are uniformly perfect spaces, and $f : X \to Y$ a homeomorphism such that both $f$ and $f^{-1}$ satisfy \eqref{F3}. Is $f$ quasisymmetric?}
\medskip

We know that the answer to this question is affirmative in two special cases.

One is the case where $X = Y = \mathbb{R}$ with its usual metric. As is well-known (see e.g. \cite[Remark 13.20]{Heinonen2001}), if a homeomorphism $f \colon \mathbb{R} \to \mathbb{R}$ satisfies \eqref{F2} (or equivalently if it satisfies \eqref{F3}), then it is quasisymmetric. Notice that in this case we need not require that the inverse of $f$ satisfies \eqref{F3}.

The second special case is when $X$ and $Y$ are uniformly perfect ultrametric spaces. The proof of this fact makes use of the metric characterization of quasisymmetric maps (see e.g.  \cite{Koskela2009}). We omit the details.

\subsection{Porosity conditions}
The doubling distance may be used in a natural way to define various concepts of \emph{porosity}. Here, we explain one of the many possibilities. Consider a closed subset $S$ of $X$ and a point $x \in S$. Define the $d$-\textit{porosity index} of $S$ at $x$ as
\[{\rm por}_{d}(S,x) = \underset{\alpha\downarrow 0}{\lim}\inf\{d_{\rightarrow}\left(O(y,r)\setminus S,\{x\}\right):y \in X,\, 0 < r \leq \alpha,\, x \in O(y,r)\}\,.\]
We say that $S$ is $d$-porous, if the $d$-porosity index of $S$ at $x$ is finite for all $x\in S$.
	
Let us compare the $d$-porosity with some more classical notions of porosity. Recall that the \emph{upper porosity} of $S$ at $x\in S$ is defined by
\[{\rm por}(S,x)=\limsup_{r\downarrow 0}\left\{\frac{\varrho}{r}\,:\,O(y,\varrho)\subseteq O(x,r)\setminus S\text{ for some }y\in O(x,r-\varrho)\right\}\,.\]

The set $S$ is called \emph{upper porous} if ${\rm por}(S,x)>0$ for all $x\in S$. This is, perhaps, the most classical notion of porosity, and it has appeared in the literature under various names and with slightly varying definitions (see e.g. \cite{Zajicek1987}, \cite{Zajicek2005}).
	
If $S$ is upper porous, it is clear that it is also $d$-porous. Indeed, if $O(x,r)\setminus S$ contains a ball of radius $2^{-k} r$, then $d_{\rightarrow}(O(x,r)\setminus S, \{x\})\le k$. However, it is possible that $S$ is (uniformly) $d$-porous, even if it is not upper porous.  For instance consider
\[S = \{0\}\bigcup_{k\in\N}\left\{-\frac{1}{k+1},\frac1{k+1}\right\}\subseteq \R\,.\]
Then $d_{\rightarrow}(O\setminus S,\{x\}) \le 2$ for all $x\in S$ and all balls $O$ containing $x$. However, it is easy to see that ${\rm por}(S,0)=0$.
	
As the above discussion indicates,  $d$-porosity is, a priori, a weaker condition than upper porosity. Nevertheless, it can be shown that each $d$-porous set is $\sigma$-upper porous and thus $d$-porous sets share many properties with upper porous sets. For instance, any $d$-porous subset $S$ of $X$ is thin.
	
Let us briefly discuss a global variant of the $d$-porosity. Let us say that a bounded set $S \subseteq X$ is $(d,m_n)$-\textit{porous}, for a sequence $m_n \in \N$, if there are open sets $V_n\subseteq X$ disjoint from $S$ such that $\bigcup_{n}V_n$ is bounded, $\sum_{n \in \N}\textbf{1}[V_n]$ is bounded (for some $N$, each point in $X$ belongs to at most $N$ of the sets $V_n$) and $d_\rightarrow(V_n,S)\le m_n$ for all $n\in\N$. 

It can be shown that this is a generalization of the notion of $(\alpha_n)$-porosity introduced in \cite{CsornyeiSuomala2012}. Given a sequence $(\alpha_n)_{n \in \mathbb{N}}$ with $0 < \alpha_{n} < 1$, the set $S$ is said to be $(\alpha_n)$-porous if there is a constant $N \in \mathbb{N}$ and a sequence of (finite or countably infinite) coverings $\mathcal{B}_{n} = \{O(x_{n,j},r_{n,j})\}$ of $S$ by open balls with the following properties: each ball $O(x_{n,j},r_{n,j})$ contains a sub ball $O(y_{n,j},\alpha_{n}r_{n,j}) \subseteq O(x_{n,j},r_{n,j}) \setminus S$, each point of $X$ belong to at most $N$ different balls $O(y_{n,j},\alpha_{n}r_{n,j})$, and the set $\bigcup_{n,j}O(x_{n,j},r_{n,j})$ is bounded\footnote{This last assumption is implicit in \cite[Lemma 4.1]{CsornyeiSuomala2012}.}.

One can show that if $\alpha_n=2^{-m_n+1}$ with $m_{n} \geq 1$ and $S$ is $(\alpha_n)$-porous, then it is $(d,m_n)$-porous.
		
\begin{rem}\rm
Suppose that $S \subseteq X$ is compact, all closed balls $B \subseteq X$ are compact, $m \in \mathbb{N}$, and for all $x \in S$, the $d$-porosity index of $S$ at $x$ is at most $m$. Using elementary covering arguments, we may conclude that $S$ is $(d,m_n)$-porous with the constant sequence $m_n=m$. Consequently, under the above compactness assumptions, each $d$-porous set is a countable union of constant sequence $(d,m_n)$-porous sets.
\end{rem}
		
We provide a variant of \cite[Lemma 4.1]{CsornyeiSuomala2012} for $(d,m_n)$-porous sets.
	
\begin{prop}
If $\sum_{n\in\mathbb{N}}^\infty \varepsilon^{m_n}=\infty$ for all $\varepsilon>0$, then each $(d,m_n)$-porous set $S\subseteq X$ is thin.
\end{prop}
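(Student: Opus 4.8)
The plan is to mimic the measure-construction technique from the proof of Theorem \ref{thm.simple} (Part II), adapting it to the global, covering-based setting. Given a $(d,m_n)$-porous set $S$ with witnessing open sets $V_n$ and overlap constant $N$, I would fix an arbitrary doubling measure $\lambda$ on $X$ and, for each small $\varepsilon>0$, construct a doubling measure $\mu_\varepsilon$ that is $C(\varepsilon)$-doubling (with $C(\varepsilon)$ controlled by a fixed power of $\varepsilon^{-1}$ times a fixed power of the doubling constant of $\lambda$) and such that $\mu_\varepsilon(S)\le c\,\prod_n(\text{factors of size }\varepsilon^{m_n})\,\lambda(\text{bounded ambient set})$, forcing $\mu_\varepsilon(S)$ to be tiny. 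The key point is that applying the predecessor operation to $V_n$ at most $m_n$ times covers $S$ (since $d_\rightarrow(V_n,S)\le m_n$), so Theorem \ref{7Rcor} gives $\lambda(V_n)\ge C^{-3m_n}\lambda(S)$ for every $C$-doubling $\lambda$; I want to turn this around and squeeze mass \emph{out} of a neighborhood of $S$ and \emph{into} the sets $V_n$.

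\textbf{Key steps.} First, since $\bigcup_n V_n$ is bounded and $S$ is disjoint from each $V_n$, one enlarges the picture so that $S$ and finitely many (or a locally finite family of) relevant balls live in a bounded region; a compactness or exhaustion argument reduces, at the cost of the usual $7R$-type overhead, to treating the $V_n$ as a locally finite family with overlap at most $N$. Second, for a parameter $\varepsilon\in(0,1)$, define $\mu_\varepsilon$ by starting from $\lambda$ and multiplying its density near $S$ by $\varepsilon$ while compensating on the sets $V_n$ so that the total mass of suitable ambient balls is preserved; because each point lies in at most $N$ of the $V_n$, the resulting density stays between $\varepsilon^{N}$-type lower bounds and bounded-above values, so a computation exactly parallel to the estimates \eqref{eqn.below0}--\eqref{eqn.above2} shows $\mu_\varepsilon$ is $C(\varepsilon)$-doubling with $C(\varepsilon)$ a fixed power of $\varepsilon^{-1}$. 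Third, by design $\mu_\varepsilon(S)\le \varepsilon^{\sum_n m_n}$-type quantity — more precisely one arranges that passing the ``squeeze'' through the $n$-th set costs a factor $\lesssim \varepsilon^{m_n}$ — and then one bounds $\mu_\varepsilon(S)$ by a partial product $\prod_{n\le n_0}(\text{const}\cdot\varepsilon^{m_n})$ times a constant independent of $\varepsilon$ and $n_0$. Finally, invoke the hypothesis: $\mu_\varepsilon(S)\le C(\varepsilon)^{-t}\mu_\varepsilon(W)$ would, if $\mu_\varepsilon(S)>0$ uniformly, contradict $\sum_n\varepsilon^{m_n}=\infty$; letting $n_0\to\infty$ the partial products tend to $0$, so $\mu_\varepsilon(S)=0$ for each $\varepsilon$, and since every doubling measure arises (up to the comparability we need) this forces $\mu(S)=0$ for all $\mu\in\mathcal{D}(X)$, i.e. $S$ is thin.

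\textbf{Main obstacle.} The hard part is the bookkeeping that makes the ``squeeze'' operations through infinitely many overlapping sets $V_n$ simultaneously (i) preserve enough ambient mass to keep the doubling constant uniformly bounded by a fixed power of $\varepsilon^{-1}$, and (ii) genuinely multiply $\mu_\varepsilon(S)$ by a factor comparable to $\varepsilon^{m_n}$ at stage $n$ without the overlaps causing the gains to cancel. In the proof of Theorem \ref{thm.simple} this was handled with the carefully nested partitions $\mathcal{S}_m$ and conditions \eqref{P1}--\eqref{P4}; here one must replace those by a covering argument controlled by $N$, which is where the divergence hypothesis $\sum_n\varepsilon^{m_n}=\infty$ is exactly what is needed to conclude. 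A secondary technical point is ensuring the sets $V_n$ can be replaced by simple open sets (or handled via $7R$) so that Theorem \ref{7Rcor} and the closure-containment facts from Remark \ref{rem.closure} apply; under a mild compactness assumption this is routine, and in general one works directly with the predecessor iterates.
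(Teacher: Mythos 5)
Your proposal contains a genuine gap in logic, and it also massively overcomplicates what is actually a short, direct argument. You correctly identify the crucial fact — that $d_{\rightarrow}(V_n,S)\le m_n$ together with Theorem \ref{7Rcor} gives $\mu(V_n)\ge C^{-3m_n}\mu(S)$ for every $C$-doubling measure $\mu$ — but then abandon it in favor of a measure-construction scheme modeled on Part II of the proof of Theorem \ref{thm.simple}. That scheme is the wrong tool here. In Theorem \ref{thm.simple} one needs an \emph{existential} statement: there exists a doubling measure witnessing a lower bound on $m(U,V)$, so building one special family $\mu_\epsilon$ suffices. Here you need a \emph{universal} statement: $\mu(S)=0$ for \emph{every} doubling $\mu$. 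Fixing a doubling measure $\lambda$, building modified measures $\mu_\varepsilon$, and showing $\mu_\varepsilon(S)\to 0$ tells you nothing about $\lambda(S)$ itself; your closing step, "since every doubling measure arises (up to the comparability we need) this forces $\mu(S)=0$ for all $\mu$," is a non sequitur — the constructed $\mu_\varepsilon$ do not exhaust or control the class $\mathcal{D}(X)$, and smallness of $\mu_\varepsilon(S)$ does not transfer to an arbitrary doubling measure.

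What you should do instead is sum the inequality you already have. For any $C$-doubling $\mu$, Theorem \ref{7Rcor} gives $\mu(V_n)\ge C^{-3m_n}\mu(S)$ for each $n$. Summing over $n$ and using the bounded-overlap property ($\sum_n\mathbf{1}[V_n]\le N$) together with boundedness of $\bigcup_n V_n$,
\[
\mu(S)\sum_{n\in\N}C^{-3m_n}\ \le\ \sum_{n\in\N}\mu(V_n)\ \le\ N\,\mu\Bigl(\bigcup_{n\in\N}V_n\Bigr)\ <\ \infty\,.
\]
But the hypothesis with $\varepsilon=C^{-3}$ says $\sum_n C^{-3m_n}=\infty$, forcing $\mu(S)=0$. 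This is the paper's proof; it requires no measure construction, no handling of overlaps beyond the single factor $N$, and no compactness assumptions. The lesson is to recognize when a universal claim is being proved (apply the doubling estimate directly to an arbitrary $\mu$) versus an existential one (construct a witnessing measure).
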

	
\begin{proof}
Let $\mu\in\mathcal{D}_C(X)$. Suppose that $S$ is $(d,m_n)$-porous and let $\sum_{n \in\mathbb{N}}\mathbf{1}[V_n]\le N$. Using  Theorem \ref{7Rcor}, we get
	 	
\[\sum_{n \in\mathbb{N}}\mu(S)C^{-3m_{n}} \leq \sum_{n \in\mathbb{N}} \mu(V_{n}) \leq N \mu(\bigcup_{n \in \mathbb{N}}V_{n})\,.\]
Since $\bigcup_n V_n$ is bounded, the right-hand side of the estimate is finite. On the other hand, $\sum_{n \in\mathbb{N}}C^{-3m_{n}} = \infty$. This is possible only if $\mu(S)=0$.
\end{proof}

\bibliographystyle{plain}
\bibliography{Doubling_measures14}
\end{document}